\numberwithin{equation}{section}
\newtheorem{theorem}{Theorem}[section]
\newtheorem{corollary}[theorem]{Corollary}
\newtheorem{lemma}[theorem]{Lemma}
\newtheorem{proposition}[theorem]{Proposition}
\theoremstyle{definition}
\newtheorem{definition}[theorem]{Definition}
\newtheorem{remark}[theorem]{Remark}
\let\oldmarginpar\marginpar
\renewcommand\marginpar[1]{\-\oldmarginpar[\raggedleft\footnotesize #1]%
  {\raggedright\footnotesize #1}}
\def\cD{\mathcal{D}}
\def\cM{\mathcal{M}}
\def\cB{\mathcal{B}}
\def\N{\mathbb{N}}
\def\R{\mathbb{R}}
\let\e=\varepsilon
\def\square{\hbox{$\sqcap\kern-7pt\sqcup$}}
\def\dev{\operatorname{div}}
\def\be{\begin{equation}}
\def\ee{\end{equation}}
\def\bea{\begin{eqnarray}}
\def\eea{\end{eqnarray}}
\def\ˆ{^{•}}
\newcommand{\p}{\partial} 
\newcommand{\la}{\lambda} 
\newcommand{\IR}{\mathbb{R}}
\newcommand{\IL}{\mathcal{L}}
\title{Lagrangian solutions to the Vlasov-Poisson system \\ with a point charge}
\author{Gianluca Crippa, Silvia Ligabue, Chiara Saffirio}
\def\adresse{
\begin{description}
\item[G. Crippa:] 
Departement Mathematik und Informatik,\\ Universit\"{a}t Basel,
Spiegelgasse 1,
CH-4051 Basel, Switzerland\\
E-mail: \texttt{gianluca.crippa@unibas.ch }

\item[S. Ligabue:] Departement Mathematik und Informatik,\\ Universit\"{a}t Basel,
Spiegelgasse 1,
CH-4051 Basel, Switzerland\\
E-mail: \texttt{silvia.ligabue@unibas.ch}

\item[C. Saffirio:] Institute of Mathematics, \\ University of Z\"urich, Winterthurerstrasse 190, CH-8057 Z\"urich, Switzerland\\
E-mail: \texttt{chiara.saffirio@math.uzh.ch}

\end{description}
}
\date{\today}
\begin{document}

\maketitle

\begin{abstract}
We consider the Cauchy problem for the repulsive Vlasov-Poisson system in the three dimensional space, where the initial datum is the sum of a diffuse density, assumed to be bounded and integrable, and a point charge. Under some decay assumptions for the diffuse density close to the point charge, under bounds on the total energy, and assuming that the initial total diffuse charge is strictly less than one, we prove existence of global Lagrangian solutions. Our result extends the Eulerian theory of~\cite{DMS}, proving that solutions are transported by the flow trajectories. The proof is based on the ODE theory developed in~\cite{BBC1} in the setting of vector fields with anisotropic regularity, where some components of the gradient of the vector field is a singular integral of a measure. 
\end{abstract}


\section{Introduction and main results}

We study the Cauchy problem associated with the Vlasov-Poisson system
\begin{equation}\label{eq:VP} \left\{
\begin{array}{l}
\partial_t f + v\cdot\nabla_x f + E\cdot\nabla_v f = 0\,,\\\\
E(t,x)=\nabla(\frac{\gamma}{|  \cdot  |}*\rho)(t,x)\,,\\\\
\rho(t,x)=\int f(t,x,v)\,dv\,,
\end{array}
\right.
\end{equation}
in the three dimensional space, where $f:\R_+\times\R^3\times\R^3\to\R_+$ stands for the non-negative density of particles in a plasma under the effect of a self-induced field $E$, while $\rho:\R_+\times\R^3\to\R_+$ is the spatial density and $\gamma\in\{-1,1\}$ is a parameter which models the repulsive ($\gamma=1$) or attractive ($\gamma=-1$) nature of the particles. We recall that the self-induced field $E(t,x)$ is a conservative force. Therefore there exists a function $U:\R_+\times\R^3\to\R$ such that $E(t,x)=\nabla_x U(t,x)$, thus the Poisson equation $-\Delta U=\rho$ is fulfilled.  In other words, we can rewrite the system \eqref{eq:VP} as a Vlasov equation coupled with a Poisson equation, from which the name Vlasov-Poisson arises.  
From a physical viewpoint, the repulsive case represents the evolution of charged particles in presence of their self-consistent electric field and it is used in plasma physics or in semi-conductor devices. The attractive case describes the motion of galaxy clusters under the gravitational field with many applications in astrophysics. In this paper we focus on the repulsive case, by fixing $\gamma=1$  in \eqref{eq:VP}.

In the last decades the Vlasov-Poisson system \eqref{eq:VP} has been largely investigated.  
Existence of classical solutions under regularity assumptions on the initial data goes back to Iordanski \cite{Ior} in dimension one and to Okabe and Ukai \cite{UO} in dimension two. The three dimensional case has been addressed first by Bardos and Degond \cite{BD} for small initial data, and then extended to a more general class of initial plasma densities by Pfaffelm{o}ser \cite{Pf} and by Lions and Perthame \cite{LP}. Improvements in three dimensions have been obtained in \cite{Sch, W, castella, L, CZ}. Global existence of weak solutions has been studied by Arsenev \cite{A} for bounded initial data with finite kinetic energy, while the global existence of renormalized solutions is due to Di Perna and Lions \cite{DiLi}, assuming finite total energy and $f_0\in  L\log L(\R^3\times\R^3)$. The latter assumption has been recently relaxed to $f_0\in L^1(\R^3\times\R^3)$ in \cite{ACF} and \cite{BBC2}. 

One might wonder what happens when $f_0\notin L^1(\R^3\times\R^3)$.  
In this paper we shall address this question by assuming $f_0$ to be the sum of an integrable bounded plasma density and a Dirac mass. This is equivalent to studying the Cauchy problem associated with the following system:
\begin{equation}\label{eq:VP-dirac}\left\{
\begin{array}{l}
\partial_t f + v\cdot\nabla_x f + (E+F)\cdot\nabla_v f = 0\,,\\\\
E(t,x)=\int\frac{x-y}{|x-y|^3}\rho(t,y)\,dy\,,\\\\
\rho(t,x)=\int f(t,x,v)\,dv\,,\\\\
F(t,x)=\frac{x-\xi(t)}{|x-\xi(t)|^3}\,,
\end{array}
\right.
\end{equation}
where the singular electric field $F:=F(t,x)$ is induced by a point charge located at a point $\xi(t)$, whose evolution is given by the Newton equations:
 \begin{equation}\label{eq:newton}
 \left\{
 \begin{array}{l}
 \dot{\xi}(t)=\eta(t)\,,\\\\
 \dot{\eta}(t)=E(t,\xi(t))\,. 
 \end{array}
 \right.
 \end{equation}

For every $(x,v)\in\R^3\times\R^3$, we denote by $f_0(x,v)=f(0,x,v)$ and by $(\xi_0,\eta_0)=(\xi(0),\eta(0))$  respectively  the initial density and initial state of the point charge in the phase space $\R^3\times\R^3$. 
The system \eqref{eq:VP-dirac}-\eqref{eq:newton} can be formally rewritten in the form \eqref{eq:VP} for the total density $f(t)+\delta_{\xi(t)}\otimes\delta_{\eta(t)}$.

The model \eqref{eq:VP-dirac}--\eqref{eq:newton} has been recently introduced by Caprino and Marchioro in \cite{CM}, where they have shown global existence and uniqueness of classical solutions in two dimensions. This result has been extended to the three dimensional case in \cite{MMP} by Marchioro, Miot and Pulvirenti. Both \cite{CM} and \cite{MMP} require that the initial plasma density does not overlap the point charge. This assumption has been relaxed in \cite{DMS}, where weak solutions of the system \eqref{eq:VP-dirac}--\eqref{eq:newton} have been obtained for initial data which may overlap the point charge, but do have to decay close to it. The price to pay is that the solution is no longer known to be unique nor Lagrangian.
In the following we will call {\em Lagrangian solution} a plasma density $f$ and a trajectory $(\xi,\eta)$ of the Dirac mass, both defined for $t \in \R_+$, such that $f$ is transported by the Lagrangian flow $(X,V)$, solution to the ODE-system  
\begin{equation}\label{eq:charXV}
\begin{cases}
\dot{X}(t,x,v) = V(t,x,v) \\
\dot{V}(t,x,v) = E ( t, X(t,x,v)) + F(t,X(t,x,v)) \\
\big( X(0,x,v) , V(0,x,v) \big) = (x,v) \,,
\end{cases}
\end{equation}
more precisely
$$
f(t,x,v) = f_0 \big( X^{-1}(t,\cdot,\cdot)(x,v) \,,\, V^{-1}(t,\cdot,\cdot)(x,v) \big) \,.
$$
This is a finer physical structural information on the solution than the mere fact that $f$ and $(\xi,\eta)$ are weak solutions of~\eqref{eq:VP-dirac}--\eqref{eq:newton}.

In the framework of classical solutions, the Eulerian description and the Lagrangian evolution of particles given by the system of characteristics are completely equivalent.
When dealing with weak or renormalized solutions, the correspondence between the Eulerian and Lagrangian formulations is non trivial and requires a careful analysis of the Lagrangian structure of transport equations with non-smooth vector fields. Indeed, without any regularity assumptions, it is not even  clear whether the flow associated with the vector field generated by a weak solution exists. 
 
In recent years the theory of transport and continuity equations with non-smooth vector fields has witnessed a massive amount of progress, also due to the large number of applications to nonlinear PDEs. In the seminal paper by DiPerna and Lions~\cite{DiLi} the theory has been first developed in the context of Sobolev vector fields, with suitable bounds on space divergence and under suitable growth assumptions. This has been extended by Ambrosio~\cite{Amb} to the setting of vector field with bounded variation ($BV$), roughly speaking allowing for discontinuities along codimension-one hypersurfaces. See also~\cite{HW} for an up-to-date survey of this theory and its recent advances.

In the context of the Vlasov-Poisson system with a Dirac mass considered in this paper (\eqref{eq:VP-dirac}-\eqref{eq:newton}) the system of characteristics is given by \eqref{eq:charXV}.
The singular electric field $F$ generated by the Dirac mass is not regular, and it does not even belong to any Sobolev space of order one or to the $BV$ space. Therefore the theory of~\cite{DiLi,Amb} cannot be directly applied to this case. However, a related theory of Lagrangian flows for non-smooth vector fields has been initiated in~\cite{CDL}. In a nutshell, the approach in~\cite{CDL} provides a suitable extension of Gr\"{o}nwall-like estimates to the context of Sobolev vector fields, by introducing a suitable functional measuring a logarithmic distance between Lagrangian flows. In addition, the theory in~\cite{CDL} has a quantitative character, providing explicit rates in the stability and compactness estimates, and it has been pushed even to situations out of the Sobolev or $BV$ contexts of~\cite{DiLi,Amb}. In particular, using more sophisticate harmonic analysis tools, the case when the derivative of the vector field is a singular integral of an $L^1$ function has been considered in~\cite{BC}. This has been further developed in~\cite{BBC1}, allowing for singular integrals of a measure, under a suitable condition on splitting of the space in two groups of variables, modelled on the situation for the Vlasov-Poisson characteristics~\eqref{eq:charXV}. This theory has been applied to the study of the Euler equation with $L^1$ vorticity~\cite{BBC3} and of the Vlasov-Poisson equation with $L^1$ density~\cite{BBC2}. The latter has also been studied in~\cite{ACF}, using the theory of maximal Lagrangian flows developed in~\cite{ACF2}.

The purpose of this paper is to recover the relation between the Eulerian and the Lagrangian picture for solutions provided in \cite{DMS} by exploiting the transport structure of the equation. In other words we aim to prove existence of Lagrangian solutions to the Vlasov-Poisson system \eqref{eq:VP} with $\gamma = 1$ and initial data $f_0+\delta_{\xi_0}\otimes\delta_{\eta_0}$, where $f_0$ satisfies the assumptions of \cite{DMS}. 

Our main result is the following

\begin{theorem}
\label{thm:main}
Let $f_0\in L^1\cap L^\infty(\R^3\times\R^3)$, such that the initial total charge
\begin{equation}\label{eq:M(0)}
M(0)=\iint f_0(x,v)\,dxdv<1
\end{equation}
 and the total energy 
 \begin{equation}\label{eq:H(0)}
 H(0)=\iint \frac{|v|^2}{2}f_0(x,v)dxdv+ \frac{|\eta_0|^2}{2}+ \frac{1}{2}\iint \frac{\rho(0,x)\rho(0,y)}{|x-y|}dxdy+\iint \frac{\rho(0,x)}{|x-\xi_0|}dx
\end{equation}  is finite.  
Assume that there exists $m_0>6$ such that for all $m<m_0$ the energy moments
\begin{equation}\label{eq:moments}
\mathcal{H}_m(0)=\iint\left(|v|^2+\frac{1}{|x-\xi_0|}\right)^{m/2}\,f_0(x,v)dxdv
\end{equation}
are finite.
Then there exists a global Lagrangian solution to the system \eqref{eq:VP-dirac}--\eqref{eq:newton}. 
\end{theorem}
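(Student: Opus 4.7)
The overall plan is a regularization–compactness–Lagrangian flow argument that builds on the Eulerian construction of \cite{DMS} and promotes its output to a Lagrangian solution by invoking the ODE theory of \cite{BBC1}.

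\textbf{Step 1: Regularization.} Mollify the initial plasma density into $f_{0,\eps}=f_0 * \phi_\eps$ and smooth the singular Coulomb kernels $x/|x|^3$ both in $E$ and in the point-charge field $F$ at scale $\eps$. Classical theory yields global smooth solutions $(f_\eps,\xi_\eps,\eta_\eps)$ with smooth characteristic flows $(X_\eps,V_\eps)$ solving the regularized version of \eqref{eq:charXV}, and the transport identity $f_\eps(t)=(X_\eps(t),V_\eps(t))_\#f_{0,\eps}$ holds by construction.

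\textbf{Step 2: Uniform a priori estimates.} Propagating the bounds of \cite{DMS} uniformly in $\eps$, one obtains: constancy of $\|f_\eps(t)\|_{L^\infty}$ and of the total charge $M_\eps(t)=M(0)$; the energy bound $H_\eps(t)\le H(0)$; polynomial-in-time growth of the moments $\mathcal{H}_m^\eps(t)$ up to order $m<m_0$; and, crucially, the quantitative decay of the diffuse mass near the point charge, i.e.\ control of $\int_{B_r(\xi_\eps(t))}\rho_\eps(t,x)\,dx$ as $r\to0$, together with $L^p$ estimates on $\rho_\eps$ for some $p>1$. These estimates exploit the sub-criticality $M(0)<1$ and the moment assumption $m_0>6$ in order to bound the singular interaction between the diffuse density and the Dirac mass uniformly in $\eps$.

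\textbf{Step 3: Eulerian limit.} Standard weak compactness extracts a limit $(f,\xi,\eta)$ together with strong convergence $E_\eps\to E$ in $L^2_{\loc}$ (via the $L^p$ bounds on $\rho_\eps$) and uniform convergence $\xi_\eps\to\xi$ on compact time intervals (via equicontinuity from the Newton ODE). The limit is a weak Eulerian solution of \eqref{eq:VP-dirac}–\eqref{eq:newton}, thereby recovering the \cite{DMS} picture.

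\textbf{Step 4: Lagrangian flow for the limit and identification.} The phase-space field $b(t,x,v)=(v,E(t,x)+F(t,x))$ has $\nabla_v b$ bounded, while $\nabla_x b$ is a singular integral of the measure $\rho(t,\cdot)+\delta_{\xi(t)}$. This is precisely the anisotropic splitting (with $x_1=x$, $x_2=v$) covered by the ODE theory of \cite{BBC1}: we apply the corresponding existence-and-stability theorem to produce a regular Lagrangian flow $(X,V)$ solving \eqref{eq:charXV} for the limit field. Applying the same stability statement to the smooth approximants $(X_\eps,V_\eps)$ yields convergence in measure to $(X,V)$; pushing forward $f_{0,\eps}$ and passing to the limit then gives $f(t)=(X(t),V(t))_\#f_0$, which is the Lagrangian identity sought. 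The pair $(f,\xi,\eta)$ is therefore a Lagrangian solution.

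\textbf{Main obstacle.} The delicate point is Step 2 combined with the application of \cite{BBC1} in Step 4: one needs uniform control, in $\eps$, of the fraction of diffuse mass concentrated near $\xi_\eps(t)$, both to pass to the Eulerian limit and to verify that the limit field $b$ lies in the class for which the stability estimates of \cite{BBC1} apply with sharp constants. This is where the assumptions $M(0)<1$ and $m_0>6$ enter in an essential way, through a bootstrap argument which shows that trajectories starting in a set of positive measure cannot collapse onto the singular set of $F$.
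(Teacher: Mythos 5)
Your high-level plan -- regularize, propagate a priori bounds from \cite{DMS}, pass to the Eulerian limit, and promote to a Lagrangian solution via \cite{BBC1} -- matches the skeleton of the paper, but Step~4 contains a genuine gap: the theory of \cite{BBC1} does \emph{not} apply off the shelf to the field $b(t,x,v)=(v,E(t,x)+F(t,x))$, and the bulk of the paper's work is an adaptation of that theory, not a citation of it.

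Concretely, assumption (R1) of \cite{BBC1} asks for a decomposition of $b/(1+|z|)$ into $L^1((0,T);L^1)+L^1((0,T);L^\infty)$, which in turn yields the superlevel decay through a bound on $\int\log(1+|Z|)$. With the point-charge field $F$ this fails uniformly along the approximation. The paper replaces (R1) by an abstract decay hypothesis on superlevels, (R1a), which must then be established by hand (Lemma~\ref{estsuper}). Two ideas make this work and both are absent from your proposal: one integrates the slowly growing function $\log(1+\log(1+|V|^2/2))$ rather than $\log(1+|Z|)$, and one measures superlevels against the \emph{weighted} measure $\mu=f_0\,\mathcal{L}^{6}$. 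The weighting is essential: after a change of variables along the flow, the contribution of $F$ reduces to $\int_0^T\iint f(\tau,x,v)|x-\xi(\tau)|^{-2}\,dx\,dv\,d\tau$, which is controlled by the virial estimate (Proposition~\ref{prop:viriale}) where $M(0)<1$ enters in an essential way. Without this weighting, trajectories through arbitrary points can collapse onto the Dirac mass and there is no uniform superlevel bound. In parallel, assumption (R2) of \cite{BBC1} requires \emph{all} blocks of $Db$ to be singular integrals of finite measures, but $D_v b_1 = I$ is merely constant; the paper therefore formulates (R2a) exploiting the splitting $b=(b_1(v),b_2(t,x))$, redefines the notion of $\mu$-regular Lagrangian flow with a weaker class of renormalizations $\beta$, and proves a new stability theorem (Theorem~\ref{teo2}) and compactness corollary (Corollary~\ref{cor:compactness-flow}) for the weighted functional $\Phi_{\delta_1,\delta_2}$. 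Your phrase ``we apply the corresponding existence-and-stability theorem'' skips all of this, and is where the argument would stall.

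A secondary but material point: the paper does not mollify $f_0$ or the Coulomb kernels. It truncates $f_0$ near the point charge and outside a compact set of phase space, keeping the singular kernels intact, and invokes \cite{MMP} for existence and uniqueness of the Lagrangian solution to the \emph{unregularized} system for the truncated data. This choice is used directly in the Remark following Lemma~\ref{estsuper}, where the comparison between $f_0(B_r\setminus G_\lambda^n)$ and $f_0^n(B_r\setminus G_\lambda^n)$ exploits the explicit form of the cut-off. If instead you smooth the kernel of $F$, you would also need a mollification-uniform version of Proposition~\ref{prop:viriale}, which is not in \cite{DMS} and is not obviously available.
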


\noindent Some remarks are in order: 
\begin{enumerate}
\item The moments \eqref{eq:moments} are propagated in time (see Proposition \ref{prop:moments} for the precise statement and \cite{DMS} for details). This implies $f\in{C}(\R_+,L^p(\R^3\times\R^3))\cap L^{\infty}(\R_+,L^{\infty}(\R^3\times\R^3))$ for $1\leq p <\infty$, $E\in L^{\infty}([0,T],{C}^{0,\alpha}(\R^3))$ for some $\alpha\in(0,1)$ (see Remark \ref{remark:rho}) and $\xi\in{C}^2(\R_+)$. 
\item We observe that the hypothesis \eqref{eq:M(0)} is needed only to get a control on the electric field generated by the point charge (see Proposition \ref{prop:viriale}).  From the viewpoint of physics, this is a purely technical and too restrictive condition. In a forthcoming paper, we plan to remove this constraint.   
\item When considering the Cauchy problem associated with \eqref{eq:VP} with $\gamma=-1$ (attractive case) and initial data $f_0+\delta_{\xi_0}\otimes\delta_{\eta_0}$,  the whole strategy fails. This is due to a crucial change of sign in the total energy $H$ and in $\mathcal{H}_m$. More precisely, the last two terms in \eqref{eq:H(0)} and the last term in \eqref{eq:moments}, representing respectively the potential energy of the system and the potential energy per particle, come with a negative sign.   
This prevents to establish a control on the trajectory of the point charge as in Proposition \ref{prop:charge-bounds} and to prove Proposition \ref{prop:moments}. \\
The simpler case of a system in which the particles in the plasma are interacting through a repulsive potential while the point charge generates an attractive force field has been treated in \cite{CMMP} in dimension two. Notice that, even in this case, the existence of solutions in three dimensions remains an interesting open problem.  
\item Theorem \ref{thm:main} does not imply uniqueness of the Lagrangian solution. In analogy to \cite{Pf}, where uniqueness of compactly supported classical solutions of \eqref{eq:VP} has been proved, uniqueness of solutions to \eqref{eq:VP-dirac}-\eqref{eq:newton} which do not overlap with the point charge and have compact support in phase space has been established  in \cite{MMP}. In the context of weak solutions to \eqref{eq:VP}, sufficient conditions for uniqueness have been proved in \cite{LP} and later extended to weak measure-valued solutions  with bounded spatial density  by Loeper \cite{L}. Recently Miot \cite{M} generalised the latter condition to a class of solutions whose $L^p$ norms of spatial density grow at most linearly w.r.t. $p$, then extended to spatial densities belonging to some Orlicz space in \cite{HM}. Unfortunately, it seems that none of these conditions apply to our setting and new ideas are needed.      
\end{enumerate}
 
Let us informally describe the main steps of our proof. We rely on the result in~\cite{MMP}, which guarantees existence of a (unique) Lagrangian solution to the Cauchy problem for the Vlasov-Poisson system \eqref{eq:VP-dirac}-\eqref{eq:newton}, provided that at initial time the plasma density has a positive distance from the Dirac mass and bounded support in the phase space. We therefore approximate the plasma density $f_0$ at initial time by a sequence $f_0^n$ obtained by cutting off $f_0$ close to the Dirac mass in the space variable and out of a compact set in phase space. We use~\cite{MMP} to construct a Lagrangian flow $(X_n,V_n)$ and a trajectory for the Dirac mass $(\xi_n,\eta_n)$ corresponding to the initial data $f_0^n$ and $(\xi_0,\eta_0)$. The assumptions of Theorem~\ref{thm:main} together with the propagation of the moments $\mathcal{H}_m$ from~\cite{DMS} entail some additional integrability of the densities $\rho_n$, which in turn implies uniform H\"older estimates on the electric fields $E_n$. Moreover, assumption~\eqref{eq:M(0)} allows to prove some uniform decay of the superlevels of the Lagrangian flows $(X_n,V_n)$, which combined with an extension of the Lagrangian theory developed in~\cite{BBC1} gives compactness of the Lagrangian flows $(X_n,V_n)$. Finally, standard energy estimates guarantee the uniform continuity of the trajectories $\xi_n$ uniformly in $n$. All this enables us to pass to the limit in the Lagrangian formulation of the problem, eventually giving a Lagrangian solution corresponding to the initial plasma density $f_0$. 

One of the main technical difficulties of our analysis is the control on large velocities. In this work, this reflects in the necessity of some control on the superlevels of the Lagrangian flows (see Definition \ref{defGlambda}). This was already an issue in \cite{BBC2} and here the situation is made even more complicated by the presence of the singular field generated by the point charge. We tackle this problem by weighting superlevels with the measure given by the initial distribution of charges $f_0(x,v)\,dx\,dv$ (see Lemma \ref{estsuper}). In this way the control on the superlevels can be proven exploiting virial type estimates on the time integral of the electric field generated by the diffuse charge and evaluated in the point charge (see Proposition \ref{prop:viriale}). This carries the physical meaning that it is only relevant to control the flow starting from points in the support of the initial density of charge.

In connection to the theory of \cite{BBC1}, this weighted estimates manifest in the presence of the density $h=f_0$ in the functional \eqref{eq:functional} measuring the compactness of the flows. Moreover, in contrast to \cite{BBC2}, which was based on the isotropic analysis of \cite{BC}, here we strongly rely on the anisotropic theory of \cite{BBC1} in which some components of the gradient of the velocity field are allowed to be singular integrals of measures, accounting for the presence of the point charge.  

Notice that in the somewhat related case of the vortex wave system  a similar analysis has been carried out in \cite{CLFMNL}. 
In that context the vector field does not enjoy an anisotropic structure, but the singularity can be dealt with exploiting the specific form of the singular part of the electric field.

The plan of the paper is the following:  in Section \ref{sec:3} we present and prove the key theorem on  Lagrangian flows; in Section \ref{sec:2} we recall some useful properties related to solutions of the Vlasov-Poisson system; in Section \ref{sec:4} we give the proof of Theorem \ref{thm:main}, which follows from compactness arguments by using the results established in Section \ref{sec:3} and \ref{sec:2}.

\medskip

\noindent {\bf Acknowledgements.} The authors are grateful to the anonymous referee for suggesting a simplification in the proof of Lemma \ref{estsuper} which led to a substantial shortening of the argument.
GC and SL are partially supported by the Swiss National Science Foundation
grant 200020\_156112 and by the ERC Starting Grant 676675 FLIRT. 
CS is supported by the Swiss National Science Foundation through the Ambizione grant S-71119-02.


\section{Lagrangian flows}\label{sec:3}

Consider a smooth solution $u$ to a transport equation in $\R^+\times\R^d$
\begin{equation*}
\partial_t u+ b\cdot\nabla_z u=0\,,
\end{equation*}
where $b=b(t,z)$ is a smooth vector field.
Then $u$ is constant along the characteristics $s\mapsto Z(s,t,z)$, exiting from $z$ at time $t$, i.e. solutions to the equation

\begin{equation}\label{eq:characteristicsystem}
\dfrac{dZ}{ds}(s,t,z)=b(s,Z(s,t,z)),
\end{equation}
with initial data $Z(t,t,z)=z$. Thus the solution can be expressed as $u(t,z)=u_0(Z(0,t,z))$. 

For simplicity from now on we will consider the initial time $t$ in \eqref{eq:characteristicsystem} fixed and denote the flow $Z(s,t,z)$ by $Z(s,z)$.

In this paper we deal with flows of non-smooth vector fields. In order to extend the usual notion of characteristics to our case, we extend the definition of \textit{regular Lagrangian flows} in a renormalized sense by introducing a reference measure with bounded density. This turns out to be convenient in the estimates involving the superlevels of the flow (see Lemma \ref{estsuper}).

\begin{definition}[$\mu$-regular Lagrangian flow]\label{def:RLF}

Given an absolutely continuous measure $\mu$ with bounded density, a vector field $b(s,z): [0,T] \times \R^{d}\rightarrow \R^{d}$, and $t\in [0,T)$, a map 
\[ Z=Z(s,z) \in C([t,T]_{s}; L^{0}_{\rm loc}(\R^d_z,d\mu)) \cap \cB([t,T]_{s}; \log\log L_{\rm loc}(\R^d_z,d\mu)) \]
is a $\mu$-regular Lagrangian flow in the renormalized sense starting at time $t$ relative to $b$ if we have the following:
\item[(1)] The equation
\be\label{eq:RSder} \p_s(\beta(Z(s,z)))=\beta^{'}(Z(s,z))b(s,Z(s,z))    \ee
holds  in $\cD ^{'}((t,T))$ for $\mu$-a.e. $z$, for every function $\beta \in C^{1}(\R^{d};\R)$ that satisfies 
\begin{equation*}
|\beta(z)| \leq C(1+\log(1+\log(1+|z|^2)))\quad \text{and}\quad |\beta^{'}(z)| \leq \dfrac{C\,|z|}{(1+|z|^2)(1+\log(1+|z|^2))}
\end{equation*} 
for all $z \in \R^{d}$;
\item[(2)] $Z(t,z)=z$ for $\mu$-a.e. $z\in \R^{d}$;
\item[(3)] There exists a $L \geq 0$, called compressibility constant, such that, for every $s\in [t,T]$,
\begin{equation}
Z(s,\cdot)_{\#}\mu \leq L \mu,
\end{equation}
i.e.
\[\mu(\{z\in \R^{d} :\, Z(s,z)\in B\})\leq L\mu(B) \qquad \mbox{for every Borel set } B \subset \R^{d}.  \]


\end{definition}

We have denoted with $L^{0}_{\rm loc}$ the space of measurable functions endowed with the local convergence in measure, by $\log\log L_{\rm loc}$ the space of measurable functions $u$ such that\break $ \log(1+\log(1+|u|^2))$ is locally integrable, and by $\cB$ the space of bounded functions. When the reference measure $\mu$ is not explicitly specified, the spaces under consideration are endowed with the Lebesgue measure. 

\begin{remark}

\noindent Our definition of $\mu$-regular Lagrangian flow slightly differs from the one in \cite{BBC1}. On the one hand we change the reference measure from the Lebesgue measure to $\mu$. On the other hand we consider a different class of $\beta$'s, which grow slower at infinity.
\end{remark}

\begin{definition}\label{defGlambda}
Let $Z:[t,T]\times\R^d\to\R^d$ be a measurable map. For every $\lambda>0$, we define the sublevel of  $Z$ as
\begin{equation}
 G_{\lambda}=\left\{ z\in \R^{d}: |Z(s,z)|\leq \lambda \text{ for almost all } s \in [t,T] \right\}.  
 \end{equation}
\end{definition}

\subsection{Setting and result of \cite{BBC1}}

We summarize here the regularity setting and the stability estimate of \cite{BBC1}. We say that a vector field $b$ satisfies \textbf{(R1)} if $b$ can be decomposed as
\begin{equation}
\frac{b(t,z)}{1+|z|}=\tilde{b}_1(t,z)+ \tilde{b}_2(t,z)
\end{equation}
where $\tilde{b}_1 \in L^{1}((0,T);L^{1}(\R^{d}))$, $\tilde{b}_2 \in L^{1}((0,T);L^{\infty}(\R^{d}))$.
Notice that this hypothesis leads to an estimate for the decay of the superlevels of a regular Lagrangian flow. In fact Lemma~3.2 of \cite{BBC1} tells us that, if $b$ satisfies (R1) and $Z$ is a regular Lagrangian flow associated with $b$ starting at time $t$, with compressibility constant $L$, then $\IL^{d}(B_r\setminus G_{\lambda}) \leq g(r,\lambda)$ for any $r,\lambda >0$, where $g$ depends only on $L$, $\|\tilde{b}_1\|_{L^{1}((0,T);L^{1}(\R^{d}))}$ and $\|\tilde{b}_2\|_{L^{1}((0,T);L^{\infty}(\R^{d}))}$ and satisfies $g(r,\lambda)\downarrow 0$ for $r$ fixed and $\lambda \uparrow \infty$.

\textbf{(R2)} We want to consider a vector field $b(t,z)$ such that its regularity changes with respect to different directions of the variable $z \in \R^{d}$, that is we consider $\R^{d}=\R^{n_1}\times \R^{n_2}$ and $z=(z_1,z_2)$ with $z_1\in \R^{n_1}$ and $z_2\in\R^{n_2}$. We denote with $D_1$ the derivative with respect to $z_1$ and $D_2$ the derivative with respect to $z_2$. Accordingly we denote $b=(b_1,b_2)(s,z)\in \R^{n_1}\times \R^{n_2}$ and $Z=(Z_1,Z_2)(s,z)\in \R^{n_1}\times \R^{n_2}$. Therefore we assume that the elements of the matrix $Db$, denoted as $(Db)^{i}_j$, are in the form
\begin{equation}
(Db)^{i}_j = \sum_{k=1}^{m} \gamma_{jk}^{i}(s,z_2)S_{jk}^{i}\textsf{m}_{jk}^{i}(s,z_1) 
\end{equation}
where 
\begin{itemize}
\item[-] $S_{jk}^{i}$ are singular integral operators associated with singular kernels of fundamental type in $\R^{n_1}$ (see \cite{Stein}),
\item[-] the functions $\gamma_{jk}^{i}$ belong to $L^{\infty}((0,T);L^{q}(\R^{n_2}))$ for some $q >1$,
\item[-] $\textsf{m}_{jk}^{i} \in L^{1}((0,T);L^{1}(\R^{n_1}))$ for all the elements of the submatrices $D_1b_1$, $D_2b_1$ and $D_2b_2$, while $\textsf{m}_{jk}^{i} \in L^{1}((0,T);\cM(\R^{n_1}))$ if $(Db)^{i}_j$ is an element of  $D_1b_2$.
\end{itemize}
We have denoted by $L^{1}((0,T);\cM(\R^{n_1}))$ the space of all functions $t\mapsto\mu(t,\cdot)$ taking values in the space $\cM(\R^{n_1})$ of finite signed measures on $\R^{n_1}$ such that
\[ \int_0^{T} \|\mu(t,\cdot)\|_{\cM(\R^{n_1})}dt < \infty.  \]

Moreover, we assume condition \textbf{(R3)}, that is
\begin{equation}
b \in L^{p}_{\rm loc}([0,T]\times \R^{d}) \,\,\,\,\,\,\,\,\, \text{for some }p>1.
\end{equation}

We recall the main theorem from \cite{BBC1}.

\begin{theorem} \label{teo1}
Let $b$ and $\bar{b}$ be two vector fields satisfying assumption (R1), where $b$ satisfies also (R2), (R3). Fix $t\in [0,T]$ and let $Z$ and $\bar{Z}$ be regular Lagrangian flows starting at time $t$ associated with $b$ and $\bar{b}$ respectively, with compressibility constants $L$ and $\bar{L}$. Then the following holds. For every $\gamma$, $r$, $\eta>0$ there exist $\lambda$, $C_{\gamma,r,\eta}>0$ such that
\[ \IL^{d}\left(B_r \cap \{|Z(s,\cdot)-\bar{Z}(s,\cdot)|>\gamma\}\right) \leq C_{\gamma,r,\eta}\|b-\bar{b}\|_{L^{1}((0,T)\times B_{\lambda})} + \eta \]
for all $s\in [t,T]$. The constants $\lambda$ and $C_{\gamma,r,\eta}$ also depend on:  
\begin{itemize}
\item The equi-integrability in $L^{1}((0,T);L^{1}(\R^{n_1}))$ of all the $\textsf{m}_{jk}^{i}$ which belong to this set, as well as the norm in $L^{1}((0,T);\cM(\R^{n_1}))$ of the remaining $\textsf{m}_{jk}^{i}$ (where these functions are associated with $b$ as in (R2)),
\item The norms of the singular integrals operators $S_{jk}^{i}$, as well as the norms of $\gamma_{jk}^{i}$ in $L^{\infty}((0,T);L^{q}(\R^{n_2}))$  (associated with $b$ as in (R2)),
\item The norm in $L^{p}((0,T)\times B_{\lambda})$ of $b$,
\item The $L^{1}((0,T);L^{1}(\R^{d})) + L^{1}((0,T);L^{\infty}(\R^{d}))$ norms of the decomposition of $b$ and $\bar{b}$ as in (R1), 
\item The compressibility constants $L$ and $\bar{L}$.
\end{itemize}
\end{theorem}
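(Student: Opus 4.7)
The approach I would take is the logarithmic functional technique of Crippa--De Lellis, adapted to the anisotropic singular-integral setting of (R2)--(R3). Fix $\gamma, r, \eta > 0$. The first step is to localize: by assumption (R1) and the decay of superlevels recalled above, I can choose $\lambda$ large enough that
$$
\IL^d \bigl( B_r \setminus ( G_\lambda \cap \bar G_\lambda ) \bigr) < \eta,
$$
where $G_\lambda, \bar G_\lambda$ are the sublevels of $Z, \bar Z$. It then suffices to bound the measure of $\{|Z(s,\cdot) - \bar Z(s,\cdot)| > \gamma\}$ inside the good set $B_r \cap G_\lambda \cap \bar G_\lambda$; the complement is absorbed into the $\eta$-error.

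For $\delta > 0$ to be chosen, I would introduce
$$
\Phi_\delta(s) = \int_{B_r \cap G_\lambda \cap \bar G_\lambda} \log \! \left( 1 + \frac{|Z(s,z) - \bar Z(s,z)|}{\delta} \right) dz.
$$
Differentiating in $s$ and controlling $\tfrac{d}{ds}|Z - \bar Z|$ by $|b(\tau,Z) - b(\tau,\bar Z)| + |b(\tau,\bar Z) - \bar b(\tau,\bar Z)|$ yields
$$
\Phi_\delta(s) \leq \int_t^s \!\! \int_{B_r \cap G_\lambda \cap \bar G_\lambda} \frac{|b(\tau, Z) - b(\tau, \bar Z)|}{\delta + |Z - \bar Z|} \, dz \, d\tau \; + \; \frac{\bar L}{\delta} \, \|b - \bar b\|_{L^1((0,T) \times B_\lambda)},
$$
where I used the compressibility bound $\bar Z_\# \IL^d \leq \bar L \, \IL^d$ for the perturbation term. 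For the first integral, the standard Morrey-type estimate (available under (R3)) allows one to dominate the difference quotient by a sum of maximal functions of $|Db|$ evaluated at $Z(\tau,z)$ and $\bar Z(\tau,z)$, so that using compressibility once more the question reduces to bounding $\| M|Db| \|_{L^1((0,T); L^1(B_\lambda))}$.

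The main obstacle I expect is the submatrix $D_1 b_2$, where $\textsf{m}_{jk}^i$ is only a measure: the maximal function of a singular integral of a measure is merely weak-$L^1$ in $z_1$, so the isotropic argument of \cite{BC} breaks down. The resolution, which is the heart of \cite{BBC1}, is to exploit the anisotropic factorization $\gamma_{jk}^i(\tau,z_2) \, S_{jk}^i \textsf{m}_{jk}^i(\tau,z_1)$: the weak-$L^1$ bound in $z_1$ pairs with $\gamma_{jk}^i \in L^\infty_t L^q_{z_2}$ (hence locally in the Lorentz space $L^{q,1}_{z_2}$ since $q > 1$) through a Lorentz-space H\"older inequality, producing a finite estimate in terms of $\|\textsf{m}_{jk}^i\|_{L^1_t \cM(\R^{n_1})}$ and $\|\gamma_{jk}^i\|_{L^\infty_t L^q}$. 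For the entries where $\textsf{m}_{jk}^i \in L^1_t L^1$, classical Calder\'on--Zygmund suffices. Collecting, one obtains $\Phi_\delta(s) \leq C_1 + C_2 \delta^{-1} \|b - \bar b\|_{L^1((0,T) \times B_\lambda)}$ with constants depending only on the data listed in the theorem.

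Chebyshev's inequality then gives
$$
\IL^d \bigl( B_r \cap G_\lambda \cap \bar G_\lambda \cap \{|Z(s) - \bar Z(s)| > \gamma\} \bigr) \leq \frac{C_1 + C_2 \delta^{-1} \|b - \bar b\|_{L^1}}{\log(1 + \gamma/\delta)},
$$
and choosing $\delta$ small enough that $C_1 / \log(1 + \gamma/\delta) \leq \eta$ produces the constant $C_{\gamma, r, \eta} := C_2 / [\delta \log(1 + \gamma/\delta)]$. Combined with the superlevel estimate from step one, this yields the claimed bound uniformly in $s \in [t,T]$.
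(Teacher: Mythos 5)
Your proposal correctly identifies the broad strategy (logarithmic functional, superlevel truncation, Chebyshev) and correctly locates the obstacle (the entries of $D_1 b_2$ are only singular integrals of measures, so the maximal function is only weak-$L^1$ in $z_1$), but the fix you propose does not close the argument, and the missing ingredient is precisely the novelty of \cite{BBC1}. The proof uses a \emph{two}-parameter anisotropic functional
\[
\Phi_{\delta_1,\delta_2}(s)=\int \log\bigl(1+|A^{-1}[Z(s,z)-\bar Z(s,z)]|\bigr)\,dz,\qquad A=\mathrm{Diag}(\delta_1,\dots,\delta_1,\delta_2,\dots,\delta_2),
\]
not the single-parameter one you write. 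After splitting the quotient, the difference quotient of $b_2$ (the part whose derivative in $z_1$ is a singular integral of a measure) appears with the prefactor $\delta_1/\delta_2$. This prefactor is the lever that makes the estimate close: one shows $|||\Psi|||_{M^1(\Omega)}\lesssim (\delta_1/\delta_2)\,\|\textsf{m}\|_{L^1_t\cM}$ while $\|\Psi\|_{L^p(\Omega)}\lesssim 1/\delta_1$ by (R3), and then the interpolation lemma of \cite{BC} yields $\|\Psi\|_{L^1(\Omega)}\lesssim (\delta_1/\delta_2)\,\|\textsf{m}\|\,\bigl[1+\log\bigl(\delta_2/(\delta_1^2\|\textsf{m}\|)\bigr)\bigr]$, which can be made as small as desired by sending the ratio $\alpha=\delta_1/\delta_2$ to zero while keeping $\delta_1,\delta_2$ free to be chosen afterwards. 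Your single-parameter functional has no analogue of $\alpha$, so $C_1$ in your Chebyshev step is a fixed (and, as explained below, actually infinite) quantity that cannot be absorbed.

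The alternative you propose to finiteness of $C_1$ --- pairing the weak-$L^1$ bound in $z_1$ with $\gamma^i_{jk}\in L^{q,1}_{z_2}$ via Lorentz duality --- does not work because the two functions live in disjoint sets of variables. The product $\gamma^i_{jk}(z_2)\,M(S^i_{jk}\textsf{m}^i_{jk})(z_1)$ is still only weak-$L^1$ in $z_1$ for each fixed $z_2$, and integrating a weak-$L^1$ function over a bounded set in $z_1$ is in general divergent; the $L^{q,1}_{z_2}$ integrability of $\gamma$ cannot repair that. Hence $\|M|Db|\|_{L^1((0,T)\times B_\lambda)}$ is \emph{not} finite under (R2), which is exactly why the isotropic argument of \cite{BC} needed to be replaced. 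What does work is: (a) keep the estimate in the Lorentz space $M^1$ rather than forcing it into $L^1$; (b) interpolate against the (R3) $L^p$ bound to recover $L^1$ control at the cost of a logarithm; (c) exploit the new free parameter $\alpha=\delta_1/\delta_2$, available only because of the anisotropic functional, to suppress the resulting term. These three steps are absent from your outline, so as written the proof has a genuine gap.
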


\subsection{Flow estimate in the new setting}
We are going now to state a variant of this theorem, where (R1) and (R2) are replaced by (R1a) and (R2a) below. The dimension $d$ will be here equal to $2N$, instead of $n_1+n_2$, and the variable $z$ will be in the form $z=(x,v)\in\R^N\times\R^N$.

We consider the following assumptions, that are adapted to our setting of the Vlasov-Poisson system with a point charge:

\textbf{(R1a)} For all $\mu$-regular Lagrangian flow $Z: [t,T]\times \R^{2N}\rightarrow \R^{2N}$ relative to $b$ starting at time $t$ with compression constant $L$, and for all $r$, $\lambda >0$,
\begin{equation}
\mu(B_r\setminus G_{\lambda}) \leq g(r,\lambda), \,\,\,\,\,\,\,\,\text{with }g(r,\lambda)\rightarrow 0\text{ as }\lambda \rightarrow \infty\text{ at fixed } r,
\end{equation} 
where $G_{\lambda}$ denotes the sublevel of the flow $Z$ defined in \eqref{defGlambda}. 

\textbf{(R2a)} Motivated by the particular structure of the Vlasov-Poisson system \eqref{eq:VP-dirac}-\eqref{eq:newton}, we assume $b$ to have the following structure:  
\begin{equation}
b(t,x,v)=(b_1,b_2)(t,x,v)=(b_1(v),b_2(t,x)),
\end{equation}
with
\begin{equation}
b_1 \in \text{Lip}(\R^{N}_v),
\end{equation} 
and where $b_2$ is such that for every $j=1,\ldots,N$,
\begin{equation}
\p_{x_j}b_2 = \sum_{k=1}^{m}S_{jk} \textsf{m}_{jk},
\end{equation}
where $S_{jk}$ are singular integrals of fundamental type on $\R^{N}$ and $\textsf{m}_{jk} \in L^{1}((0,T);\cM(\R^{N}))$.

Notice that assumption \textbf{(R2a)} does not imply assumption  \textbf{(R2)}, in which it is relevant that all components of $Db$ are singular integrals of {\it finite} measures. 


\begin{theorem}\label{teo2}
Let $\mu=h\, \mathcal{L}^{2N}$ with $h\in L^{1}\cap L^{\infty}$ and non-negative. Let $b$ and $\bar{b}$ be two vector fields satisfying (R1a), $b$ satisfying also (R2a), (R3). Given $t\in [0,T]$, let $Z$ and $\bar{Z}$ be $\mu$-regular Lagrangian flows starting at time $t$ associated with $b$ and $\bar{b}$ respectively, with sublevels $G_{\lambda}$ and $\bar{G}_{\la}$, and compressibility constants $L$ and $\bar{L}$. Then the following holds.\newline
For every $\gamma,\,r,\,\eta >0$, there exist $\lambda$, $C_{\gamma, r, \eta}>0$ such that
\begin{equation*}
\mu(B_r \cap \{|Z(s,\cdot )- \bar{Z}(s,\cdot )|>\gamma\} )\leq C_{\gamma, r, \eta} \|b-\bar{b}\|_{L^{1}((0,T)\times B_{\lambda})}+ \eta
\end{equation*}
uniformly in $s,\,t\in [0,T]$. The constants $\lambda$ and $C_{\gamma, r, \eta}$ also depend on:
\begin{itemize}
\item The norms of the singular integral operators $S_{jk}$ from {(R2a)},
\item The norms in $L^{1}((0,T);\cM(\R^{N}))$ of $\textsf{m}_{jk}$  from {(R2a)},
\item The Lipschitz constant of $b_1$ from {(R2a)},
\item The norm in $L^{p}((0,T)\times B_{\lambda})$ of $b$ corresponding to {(R3)},
\item The rate of decay of $\mu(B_r\setminus G_{\la})$ and $\mu(B_r \setminus \bar{G}_{\la})$ from {(R1a)},
\item The norm in $L^{\infty}(\R^{2N})$ of the function $h$ defined in {(R1a)},
\item The compressibility constants $L$ and $\bar{L}$.
\end{itemize}
\end{theorem}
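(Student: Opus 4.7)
The proof adapts the strategy of \cite{BBC1} to the weighted reference measure $\mu = h\,\mathcal{L}^{2N}$ and to the specialized anisotropic structure (R2a). The central object is the logarithmic functional
\[
\Phi_\delta(s) := \int_{B_r \cap G_\lambda \cap \bar{G}_\lambda} \log\!\left(1 + \frac{|Z(s,z) - \bar{Z}(s,z)|}{\delta}\right) h(z)\,dz,
\]
where $\delta > 0$ is a small parameter and $\lambda > 0$ is a threshold to be chosen large via (R1a). Boundedness of $h$ together with the constraint $|Z|,\,|\bar{Z}| \leq \lambda$ on the integration domain ensures $\Phi_\delta(s) < \infty$ uniformly in $s$.

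Differentiating in $s$ and using the flow equations for $Z$ and $\bar{Z}$ yields
\[
\Phi_\delta'(s) \leq \int_{B_r \cap G_\lambda \cap \bar{G}_\lambda} \frac{|b(s,Z) - \bar{b}(s,\bar{Z})|}{\delta + |Z - \bar{Z}|}\, h(z)\,dz.
\]
Splitting via the triangle inequality $|b(s,Z) - \bar{b}(s,\bar{Z})| \leq |b(s,Z) - b(s,\bar{Z})| + |b(s,\bar{Z}) - \bar{b}(s,\bar{Z})|$, the second contribution is controlled through the compressibility $\bar{Z}(s,\cdot)_\#\mu \leq \bar{L}\mu$ and $h \in L^\infty$ by $\delta^{-1}\|h\|_\infty \bar{L}\,\|b - \bar{b}\|_{L^1((0,T)\times B_\lambda)}$. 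In the first contribution the structure (R2a) is exploited: the $b_1$ component satisfies $|b_1(V) - b_1(\bar{V})| \leq \mathrm{Lip}(b_1)|V - \bar{V}| \leq \mathrm{Lip}(b_1)(\delta + |Z - \bar{Z}|)$, so its quotient is uniformly dominated by $\mathrm{Lip}(b_1)$, contributing a harmless constant $\mathrm{Lip}(b_1)\,\|h\|_\infty\,\mathcal{L}^{2N}(B_r)$.

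The $b_2$ component is the analytic core of the argument. Since $\partial_x b_2$ is only a singular integral of a measure, the usual Morrey-type bound $|f(x) - f(y)| \lesssim |x-y|(Mf(x) + Mf(y))$ fails; instead we rely on the smoothing and maximal-function machinery of \cite{BBC1}, which yields an estimate of the form $|b_2(s,X) - b_2(s,\bar{X})| \lesssim |X-\bar{X}|\bigl(U(s,X) + U(s,\bar{X})\bigr) + R_\epsilon$, where $U$ is a maximal-type operator applied to smoothed versions of the measures $\textsf{m}_{jk}$ and $R_\epsilon$ is a small remainder. Inserted into $\Phi_\delta'$, the quotient is bounded by $U(s,X) + U(s,\bar{X})$; pushing forward by $Z(s,\cdot)$ and $\bar{Z}(s,\cdot)$ via the compressibility constants with weight $h \in L^\infty$ produces a bound in terms of $\|U\|_{L^1((0,T) \times \R^N)}$, controlled via the weak-$(1,1)$ action of maximal operators on the total-variation norms $\|\textsf{m}_{jk}\|_{L^1((0,T);\mathcal{M}(\R^N))}$. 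Integrating in $s$ yields $\Phi_\delta(s) \leq C_1 + C_2\,\delta^{-1}\|b - \bar{b}\|_{L^1}$ uniformly on $[t,T]$.

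To conclude, write
\[
\mu\bigl(B_r \cap \{|Z - \bar{Z}| > \gamma\}\bigr) \leq \mu(B_r \setminus G_\lambda) + \mu(B_r \setminus \bar{G}_\lambda) + \mu\bigl(B_r \cap G_\lambda \cap \bar{G}_\lambda \cap \{|Z - \bar{Z}| > \gamma\}\bigr).
\]
The first two terms are made $\leq \eta/2$ by choosing $\lambda$ large enough via (R1a). Chebyshev's inequality bounds the last term by $\Phi_\delta(s)/\log(1 + \gamma/\delta)$, and selecting $\delta$ small enough produces the target estimate. The main obstacle is the singular-integral-of-measures piece: it is the essential technical content inherited from \cite{BBC1}, and the novelty here reduces to the multiplicative factor $\|h\|_\infty$ arising from the weight (which passes through all estimates trivially, since $h \in L^\infty$) together with the direct use of (R1a) in place of the decomposition-based (R1) to control the sublevel complements.
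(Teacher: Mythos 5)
Your overall scaffolding (logarithmic functional, Chebyshev at the end, sublevel complements made small via (R1a)) matches the paper, and the treatment of the $b_1$ block and the $b-\bar b$ transport term is correct. But there is a genuine gap in the treatment of the $b_2$ block, and it is precisely the technical heart of the theorem.

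You claim that after pushing forward along the flows the quotient is ``controlled via the weak-$(1,1)$ action of maximal operators on the total-variation norms,'' yielding a bound in terms of $\|U\|_{L^1((0,T)\times\R^N)}$ and hence $\Phi_\delta(s)\le C_1 + C_2\delta^{-1}\|b-\bar b\|_{L^1}$. This does not hold: when $\partial_x b_2 = \sum S_{jk}\mathsf m_{jk}$ with $\mathsf m_{jk}$ a measure, the maximal function $U=\mathfrak U$ lies only in the Lorentz space $M^1 = L^{1,\infty}$ and is in general \emph{not} integrable, so $\|U\|_{L^1}$ is infinite. The weak-$(1,1)$ bound gives exactly the $M^1$ pseudo-norm $|||\mathfrak U|||_{M^1}\lesssim \|\mathsf m\|_{\mathcal M}$ and nothing stronger. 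This is the reason the paper does not estimate the quotient directly in $L^1$, but instead keeps the integrand in the form
\[
\Psi = \min\Big\{\,|A^{-1}[b(s,Z)-b(s,\bar Z)]|\,,\;\tfrac{\delta_1}{\delta_2}\tfrac{|b_2(s,X)-b_2(s,\bar X)|}{|X-\bar X|}\Big\},
\]
so that $\Psi$ has a genuine $L^p$ bound (from the first argument of the $\min$, via (R3) and the change of variables along the flow) \emph{and} an $M^1$ bound with small prefactor $\delta_1/\delta_2$ (from the second argument of the $\min$). The interpolation lemma of~\cite{BC} between $L^p$ and $M^1$ then produces an $L^1$ bound with a logarithmic correction, which is the only way to make $\|\Psi\|_{L^1}$ finite here.

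A second, related gap is your use of a single scaling parameter $\delta$ in the functional. The paper uses the anisotropic matrix $A=\mathrm{Diag}(\delta_1,\dots,\delta_1,\delta_2,\dots,\delta_2)$ and the resulting two-parameter functional $\Phi_{\delta_1,\delta_2}$, and this anisotropy is not cosmetic. The ratio $\alpha=\delta_1/\delta_2$ is the knob that makes the (post-interpolation) $b_2$ contribution $\alpha\|\mathsf m\|[1+\log(1/(\delta_1\alpha\|\mathsf m\|))]$ small, while the Lipschitz $b_1$ contribution costs $\delta_2/\delta_1=1/\alpha$ and the Chebyshev step divides by $\log(1+\gamma/\delta_2)$. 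The parameters are then fixed in the order $\lambda$ (superlevels), then $\alpha$ (to tame the measure term), then $\delta_2$ (to absorb $1/\alpha$ through the logarithm). With a single $\delta$ this separation is not available: the $M^1$ pseudo-norm of the quotient carries no small prefactor, and no choice of $\delta$ alone makes the $b_2$ term vanish as required.

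Concretely, to repair the proposal you would need to (i) replace your isotropic functional by the anisotropic $\Phi_{\delta_1,\delta_2}$; (ii) keep the $\min$ between the direct estimate and the maximal-function quotient estimate; (iii) prove $\Psi\in L^p$ and $\Psi\in M^1$ (the latter with prefactor $\delta_1/\delta_2$, using the product structure of the superlevel set to pass from $M^1_x$ to $M^1_{x,v}$); and (iv) invoke the $L^p$--$M^1$ interpolation lemma of~\cite{BC} in place of the (incorrect) $L^1$ bound on the maximal function. Your observations about the role of $\|h\|_\infty$ and about (R1a) replacing the decomposition (R1) to control the sublevel complements are accurate and do match what the paper does.
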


\begin{proof}
The proof follows the same line as in Theorem \ref{teo1} (see \cite{BBC1}), with some modifications due to the different hypotheses. Given $\delta_1$, $\delta_2>0$, let $A$ be the constant $2N\times 2N$ matrix
\[ A= \text{Diag}(\underbrace{\delta_1,\ldots,\delta_1}_{\text{$N$ times}},\underbrace{\delta_2,\ldots,\delta_2}_{\text{$N$ times}}), \]
that means $A(x,v)=(\delta_1 x, \delta_2 v)$. We consider the following functional depending on the two parameters $\delta_1$ and $\delta_2$, with $\delta_1\leq \delta_2$:
\begin{equation}\label{eq:functional}
\Phi_{\delta_1,\delta_2}(s)= \iint\limits_{B_r\cap G_{\la}\cap \bar{G}_{\la}} \text{log}(1+|A^{-1}[Z(s,x,v)-\bar{Z}(s,x,v)]|)\, h(x,v)\, dx\,dv.
\end{equation}
In order to improve the readability of the following estimates, we will use the notation ``$\lesssim $" to denote an estimate up to a constant only depending on absolute constants and on the bounds assumed in Theorem \ref{teo2}, and the notation ``$\lesssim_{\la}$" to mean that the constant could also depend on the truncation parameter for the superlevels of the flow $\la$. The norm of the measure $\textsf{m}$ however will be written explicitly.
\medskip

\noindent \textit{Step 1: Differentiating $\Phi_{\delta_1,\delta_2}.$} Differentiating with respect to time and taking out of the integral the $L^{\infty}$ norm of $h$, we get
\begin{align*}
\Phi_{\delta_1,\delta_2}'(s) &\leq \|h\|_{L^{\infty}(\R^{2N})}  \iint\limits_{B_r\cap G_{\la}\cap \bar{G}_{\la}}  \frac{|A^{-1}[b(s,Z(s,x,v))-\bar{b}(s,\bar{Z}(s,x,v))]|}{1+|A^{-1}[Z(s,x,v)-\bar{Z}(s,x,v)]|}\, dx\,dv \\
& \lesssim  \iint\limits_{B_r\cap G_{\la}\cap \bar{G}_{\la}}  \frac{|A^{-1}[b(s,Z(s,x,v))-\bar{b}(s,\bar{Z}(s,x,v))]|}{1+|A^{-1}[Z(s,x,v)-\bar{Z}(s,x,v)]|}\, dx\,dv\,.
\end{align*}
Then we set $Z(s,x,v)=Z$ and $\bar{Z}(s,x,v)=\bar{Z}$ and we estimate
\[ \Phi_{\delta_1,\delta_2}'(s) \lesssim  \iint\limits_{B_r\cap G_{\la}\cap \bar{G}_{\la}} |A^{-1}[b(s,\bar{Z})-\bar{b}(s,\bar{Z})]|\,dx\,dv +  \iint\limits_{B_r\cap G_{\la}\cap \bar{G}_{\la}}  \frac{|A^{-1}[b(s,Z)-b(s,\bar{Z})]|}{1+|A^{-1}[Z-\bar{Z}]|}\, dx\,dv.   \]
After a change of variable along the flow $\bar{Z}$ in the first integral, and noting that $\delta_1 \leq \delta_2$, we further obtain
\begin{align*}
\Phi_{\delta_1,\delta_2}'(s) \lesssim \frac{\bar{L}}{\delta_1}&\|b(s,\cdot)-\bar{b}(s,\cdot)\|_{L^{1}(B_{\la})} \\
&+  \iint\limits_{B_r\cap G_{\la}\cap \bar{G}_{\la}}  \text{min} \left\lbrace |A^{-1}[b(s,Z)-b(s,\bar{Z})]|,\frac{|A^{-1}[b(s,Z)-b(s,\bar{Z})]|}{|A^{-1}[Z-\bar{Z}]|}  \right\rbrace  dx\,dv\,.
\end{align*}

\noindent \textit{Step 2: Splitting the quotient.} Using the special form of $b$ from (R2a) and the action of the matrix $A^{-1}$, we have
\[ A^{-1}[Z-\bar{Z}] = \left(\frac{X-\bar{X}}{\delta_1}, \frac{V-\bar{V}}{\delta_2}\right)\]
and
\[ A^{-1}[b(s,Z)-b(s,\bar{Z})] = \left(\frac{b_1(V)-b_1(\bar{V})}{\delta_1}, \frac{b_2(s,X)-b_2(s,\bar{X})}{\delta_2}\right). \]
Therefore
\begin{align*}
\Phi_{\delta_1,\delta_2}'(s) &\lesssim \frac{\bar{L}}{\delta_1}\|b(s,\cdot)-\bar{b}(s,\cdot)\|_{L^{1}(B_{\la})} + \\
& \iint\limits_{B_r\cap G_{\la}\cap \bar{G}_{\la}}  \text{min}\left\lbrace |A^{-1}[b(s,Z)-b(s,\bar{Z})]|,  \frac{1}{\delta_1}\frac{|b_1(V)-b_1(\bar{V})|}{|A^{-1}[Z-\bar{Z}]|}+ \frac{1}{\delta_2}\frac{|b_2(s,X)-b_2(s,\bar{X})|}{|A^{-1}[Z-\bar{Z}]|}  \right\rbrace  dx\,dv  \\
& \leq \frac{\bar{L}}{\delta_1}\|b(s,\cdot)-\bar{b}(s,\cdot)\|_{L^{1}(B_{\la})} + \\
&  \iint\limits_{B_r\cap G_{\la}\cap \bar{G}_{\la}}  \text{min}\left\lbrace |A^{-1}[b(s,Z)-b(s,\bar{Z})]|,  \frac{\delta_2}{\delta_1}\frac{|b_1(V)-b_1(\bar{V})|}{|V-\bar{V}|}+ \frac{\delta_1}{\delta_2}\frac{|b_2(s,X)-b_2(s,\bar{X})|}{|X-\bar{X}|}  \right\rbrace dx\,dv \\
&\leq \frac{\bar{L}}{\delta_1}\|b(s,\cdot)-\bar{b}(s,\cdot)\|_{L^{1}(B_{\la})} + \frac{\delta_2}{\delta_1}\text{Lip}(b_1) \IL^{2N}(B_r)+ \\
&  \iint\limits_{B_r\cap G_{\la}\cap \bar{G}_{\la}} \text{min} \left\lbrace |A^{-1}[b(s,Z)-b(s,\bar{Z})]|,\frac{\delta_1}{\delta_2}\frac{|b_2(s,X)-b_2(s,\bar{X})|}{|X-\bar{X}|}  \right\rbrace dx\,dv \\
& \leq \frac{\bar{L}}{\delta_1}\|b(s,\cdot)-\bar{b}(s,\cdot)\|_{L^{1}(B_{\la})} + \frac{\delta_2}{\delta_1}\text{Lip}(b_1) \IL^{2N}(B_r)+  \iint\limits_{B_r\cap G_{\la}\cap \bar{G}_{\la}}  \Psi(s,z)\, dx\,dv\,,
\end{align*}
where we denoted 
\begin{equation*}
\Psi(s,z)=\text{min} \left\lbrace |A^{-1}[b(s,Z(s,z))-b(s,\bar{Z}(s,z))]|,\frac{\delta_1}{\delta_2}\frac{|b_2(s,X(s,z))-b_2(s,\bar{X}(s,z))|}{|X(s,z)-\bar{X}(s,z)|}  \right\rbrace. 
\end{equation*}

\noindent \textit{Step 3: Definition of the function $\mathfrak{U}$.} Using assumption (R2a), we can now use the estimate of \cite{BC} on the difference quotient of $b_2$,
\begin{equation}\label{Um}
\frac{|b_2(s,X(s,z))-b_2(s,\bar{X}(s,z))|}{|X(s,z)-\bar{X}(s,z)|} \leq \mathfrak{U}(s,X(s,z))+ \mathfrak{U}(s,\bar{X}(s,z)),
\end{equation}
where $\mathfrak{U}$ for fixed $s$ is given by
\begin{equation*}
\mathfrak{U}(s,x)=\sum_{j=1}^{N}\sum_{k=1}^{m}M_j(S_{jk}\textsf{m}_{jk}(s,x)),
\end{equation*}
with $M_j$ a certain smooth maximal operator on $\R^{N}_x$. Inequality \eqref{Um} extends standard maximal inequalities and, due to the presence of singular integrals in the derivative of $b$, requires the use of smooth maximal functions, in which instead of averages one considers averages weighted by a smooth convolution kernel (see Chapter 2 in \cite{Gr} and \cite{BC}).
\medskip

\noindent \textit{Step 4: Estimates on $\Psi$.} Let $\Omega= (t,\tau)\times B_{r}\cap G_{\la}\cap \bar{G}_{\la} \subset \R^{2N+1}$ and $\Omega'= (t,\tau)\times B_{\la} \subset \R^{2N+1}$.
We can estimate the $L^{p}(\Omega)$ norm of $\Psi$ by considering the first element of the minimum and changing variables along the flows:
\begin{equation}\label{psi-Lp}
\|\Psi\|_{L^{p}(\Omega)}\leq \|A^{-1}[b(s,Z)-b(s,\bar{Z})]\|_{L^p(\Omega)}\leq \frac{L+\bar{L}}{\delta_1}\|b\|_{L^{p}(\Omega')}\lesssim_{\la} \frac{1}{\delta_1}.
\end{equation}
Considering now the second element of the minimum and eq.n \eqref{Um}, we can also bound the $\textit{M}\,^{1}(\Omega)$ pseudo-norm of $\Psi$ (where $\textit{M}\,^{p}$, $1\leq p\leq \infty$, is the Lorentz space $L^{p,\infty}$ as defined for instance in Chapter 1 of \cite{Gr1}):
\begin{align*}
|||\Psi|||_{\textit{M}^{1}(\Omega)} & \leq \frac{\delta_1}{\delta_2}|||\mathfrak{U}(s,X)+\mathfrak{U}(s,\bar{X})|||_{\textit{M}^{1}(\Omega)} \leq \frac{\delta_1}{\delta_2}(L+\bar{L})|||\mathfrak{U}|||_{\textit{M}^{1}(\Omega')} \\ 
& \lesssim \frac{\delta_1}{\delta_2}\|\, |||\mathfrak{U}(s,x)|||_{\textit{M}^{1}_{x,v}(B_{\la})} \|_{L^{1}((t,\tau))} \leq \frac{\delta_1}{\delta_2} \parallel |||\mathfrak{U}(s,x)|||_{\textit{M}^{1}(B_{\la}^{x} \times B_{\la}^{v})} \parallel_{L^{1}((t,\tau))}  \\
& \leq \frac{\delta_1}{\delta_2} \parallel \parallel |||\mathfrak{U}(s,x)|||_{\textit{M}^{1}_{x}(B_{\la})} \parallel_{L^{1}_{v}(B_{\la})} \parallel_{L^{1}((t,\tau))} 
\leq \frac{\delta_1}{\delta_2}(2\la)^{N} \|\, |||\mathfrak{U}(s,x)|||_{\textit{M}^{1}(\R^{N})} \|_{L^{1}((t,\tau))},
\end{align*}
that is
\begin{equation*}
|||\Psi|||_{\textit{M}^{1}(\Omega)}\lesssim_{\la}\frac{\delta_1}{\delta_2} \|\, |||\mathfrak{U}(s,x)|||_{\textit{M}^{1}(\R^{N})} \|_{L^{1}((t,\tau))}.
\end{equation*}
From  Theorem 2.10 in \cite{BBC1}, we know 
\[  |||\mathfrak{U}(s,\cdot)|||_{\textit{M}^{1}(\R^{N})} \lesssim \| \textsf{m}(s,\cdot)\|_{\cM(\R^{N})},   \]
and thus
\begin{equation}\label{psi-M1}
|||\Psi|||_{\textit{M}^{1}(\Omega)}\lesssim_{\la}\frac{\delta_1}{\delta_2} \| \textsf{m}\|_{L^{1}((t,\tau);\cM(\R^{N}))}.
\end{equation}
\newline
\medskip
\textit{Step 5: Interpolation.} We have now the ingredients to apply the Interpolation Lemma 2.2 in \cite{BC}, which allows to bound the norm in $L^{1}(\Omega)$ of $\Psi$ using $\|\Psi\|_{L^{p}(\Omega)}$ and $|||\Psi|||_{\textit{M}^{1}(\Omega)}$ as follows:
\begin{equation}
\|\Psi\|_{L^{1}(\Omega)} \lesssim |||\Psi|||_{\textit{M}^{1}(\Omega)}\left[1+\log\left(\frac{\|\Psi\|_{L^{p}(\Omega)}}{|||\Psi|||_{\textit{M}^{1}(\Omega)}} \right)   \right].
\end{equation}
Therefore, using the monotonicity of the functions $\log(y)$ and $y\left[1+\log(\frac{1}{y})\right]$ and the bounds \eqref{psi-Lp} and \eqref{psi-M1}, we get
\begin{equation}\label{interpolation}
\|\Psi\|_{L^{1}(\Omega)} \lesssim_{\la} \frac{\delta_1}{\delta_2}\|\textsf{m}\|\left[1+\log\left(\frac{\delta_2}{\delta_1^{2}\|\textsf{m}\|}  \right)   \right].
\end{equation}\newline
\textit{Step 6: Upper bound for $\Phi_{\delta_1,\delta_2}$.} Integrating in time, from $t$ to $\tau$, the last inequality of Step~2, we obtain
\begin{align}
\Phi_{\delta_1,\delta_2}(\tau)&\lesssim \frac{\bar{L}}{\delta_1}\|b-\bar{b}\|_{L^{1}(\Omega)} + T\frac{\delta_2}{\delta_1}\text{Lip}(b_1) \IL^{2N}(B_r)+ \int_{\Omega} \Psi(s,z)\, dz \,ds \\
&\lesssim \frac{1}{\delta_1}\|b-\bar{b}\|_{L^{1}(\Omega)} + \frac{\delta_2}{\delta_1}+ \|\Psi\|_{L^{1}(\Omega)}.
\end{align}
Therefore, applying \eqref{interpolation} and setting $\frac{\delta_1}{\delta_2}=\alpha$, we get
\begin{equation}\label{boundPhi}
\Phi_{\delta_1,\delta_2}(\tau)\lesssim_{\la} \frac{1}{\delta_1}\|b-\bar{b}\|_{L^{1}(\Omega)} + \frac{1}{\alpha}+\alpha\|\textsf{m}\|\left[1+\log\left(\frac{1}{\delta_1\alpha\|\textsf{m}\|}  \right)   \right].
\end{equation}\newline
\textit{Step 7: Final estimate.} Fix $\gamma>0$. By definition of $\Phi_{\delta_1,\delta_2}$ and $\mu$, since $h$ is non negative, we have
\begin{align}
\Phi_{\delta_1,\delta_2}(\tau) &\geq \int_{B_r\cap\{|Z(\tau,z)-\bar{Z}(\tau,z)|>\gamma\}\cap G_{\la}\cap \bar{G}_{\la}}\log\left(1+\frac{\gamma}{\delta_2}\right)h(s,z)\,dz \nonumber\\
& = \log\left(1+\frac{\gamma}{\delta_2}\right) \mu\left(B_r\cap \{|Z(\tau,z)-\bar{Z}(\tau,z)|>\gamma\}\cap G_{\la}\cap \bar{G}_{\la} \right).\nonumber
\end{align}
This implies that
\begin{equation}\label{mis}
\mu\left(B_r\cap \{|Z(\tau,z)-\bar{Z}(\tau,z)|>\gamma\} \right)\leq \frac{\Phi_{\delta_1,\delta_2}(\tau)}{\log\left(1+\frac{\gamma}{\delta_2}\right)}+\mu(B_r\setminus G_{\la})+\mu(B_r\setminus \bar{G}_{\la}).
\end{equation}
Combining \eqref{boundPhi} and \eqref{mis} we obtain
\begin{align*}
& \mu\left(B_r\cap \{|Z(\tau,z)-\bar{Z}(\tau,z)|>\gamma\} \right) \\
&\leq C_{\la}\left\lbrace \frac{\frac{\|b-\bar{b}\|}{\delta_1}+\frac{1}{\alpha}+\alpha\|\textsf{m}\|\left[1+\log\left(\frac{1}{\delta_1\alpha\|\textsf{m}\|}\right)\right]}{\log\left(1+\frac{\gamma}{\delta_2}\right)}\right\rbrace 
+\mu(B_r\setminus G_{\la})+\mu(B_r\setminus \bar{G}_{\la})\\
& = C_{\la}\left\lbrace\frac{\|b-\bar{b}\|}{\delta_1 \log\left(1+\frac{\gamma}{\delta_2}\right)}+\frac{1}{\alpha \log\left(1+\frac{\gamma}{\delta_2}\right)}+ \frac{\alpha\|\textsf{m}\|\left[1+\log\left(\frac{1}{\delta_1\alpha\|\textsf{m}\|}\right)\right]}{\log\left(1+\frac{\gamma}{\delta_2}\right)} \right\rbrace +\mu(B_r\setminus G_{\la})+\mu(B_r\setminus \bar{G}_{\la}) \\
& = 1)+2)+3)+4)+5).
\end{align*}
Fix $\eta >0$. Since $b$ and $\bar{b}$ satisfy assumption (R1a), we can choose $\la >0$ large enough so that $4)+5)\leq \frac{2\eta}{4}$. Then, replacing $\delta_1$ with $\alpha\cdot\delta_2$, we notice that $3)$ is uniformly bounded for $\delta_2\rightarrow 0$, so we can choose $\alpha$ small enough in order to get $3)\leq \frac{\eta}{4}$. Now $\la$ and $\alpha$ are fixed, but $\delta_1$ and $\delta_2$ are free to be chosen as long as the ratio equals $\alpha$. Hence we choose $\delta_2$ small enough so that $2)\leq \frac{\eta}{4}$. This fixes all parameters.

Setting
\[ C_{\gamma,r,\eta}=\frac{C_{\la}}{\delta_1\log\left(1+\frac{\gamma}{\delta_2}\right)}  \]
we have proven our statement.
\end{proof}

\subsection{Uniqueness, stability and compactness}

In this subsection we use the result obtained in Theorem \ref{teo2} to show uniqueness, stability, and compactness of the regular Lagrangian flow.

\begin{corollary}[Uniqueness]
Let $b$ be a vector field satisfying assumptions (R1a), (R2a) and (R3), and fix $t\in [0,T]$. Then, the $\mu$-regular Lagrangian flow associated with $b$ starting at time $t$, if it exists, is unique $\mu$-a.e..
\end{corollary}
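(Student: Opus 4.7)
The plan is to apply Theorem \ref{teo2} with $\bar{b} = b$ and with $Z$, $\bar{Z}$ being two $\mu$-regular Lagrangian flows associated with $b$ starting at the same time $t$. Since the two vector fields coincide, the term $\|b - \bar{b}\|_{L^1((0,T)\times B_\lambda)}$ vanishes, and the stability estimate of Theorem \ref{teo2} collapses to
\begin{equation*}
\mu\bigl(B_r \cap \{|Z(s,\cdot) - \bar{Z}(s,\cdot)| > \gamma\}\bigr) \leq \eta
\end{equation*}
for every $\gamma, r, \eta > 0$ and every $s \in [t,T]$.

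First I would verify that the hypotheses of Theorem \ref{teo2} are met: both flows satisfy (R1a) (each one has its own sublevel decay, both controllable since they share the same vector field and the class of admissible flows for the uniqueness statement has uniformly bounded compressibility constants; alternatively, one argues for a pair at a time), while $b$ satisfies (R2a) and (R3) by assumption. Then, holding $\gamma$ and $r$ fixed and letting $\eta \downarrow 0$, I conclude
\begin{equation*}
\mu\bigl(B_r \cap \{|Z(s,\cdot) - \bar{Z}(s,\cdot)| > \gamma\}\bigr) = 0
\end{equation*}
for every $s \in [t,T]$ and every $\gamma, r > 0$.

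Next, I would send $\gamma \downarrow 0$ along a countable sequence (e.g. $\gamma = 1/k$) and $r \uparrow \infty$ along $r = k$, and use countable subadditivity of $\mu$ to obtain
\begin{equation*}
\mu\bigl(\{Z(s,\cdot) \neq \bar{Z}(s,\cdot)\}\bigr) = 0 \qquad \text{for every } s \in [t,T].
\end{equation*}
Since the exceptional set depends a priori on $s$, I would finally use the continuity in $s$ of $Z$ and $\bar{Z}$ as maps from $[t,T]$ into $L^0_{\loc}(\R^d_z, d\mu)$ (part of the definition of $\mu$-regular Lagrangian flow) to upgrade the $s$-pointwise equality to equality in this function space, hence to the statement that $Z = \bar{Z}$ $\mu$-almost everywhere on $[t,T] \times \R^{2N}$ in the natural sense.

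The only subtle point, not really an obstacle but worth flagging, is making sure that the dependence of the constants $\lambda$ and $C_{\gamma,r,\eta}$ in Theorem \ref{teo2} on the two flows is acceptable: both compressibility constants and both rates of decay of $\mu(B_r \setminus G_\lambda)$ enter, but since $\bar{b} = b$ these bounds are exactly the same quantitative data associated with $b$ itself (and with the two flows whose uniqueness we are comparing), so the application is legitimate.
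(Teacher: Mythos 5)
Your proposal is correct and follows exactly the paper's own argument: apply Theorem~\ref{teo2} with $\bar b=b$, observe that the right-hand side reduces to $\eta$, and send $\eta\downarrow 0$, $\gamma\downarrow 0$, $r\uparrow\infty$ to conclude $Z=\bar Z$ $\mu$-a.e. The additional remarks about checking the hypotheses and upgrading the pointwise-in-$s$ equality via continuity are just making explicit steps the paper leaves implicit.
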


\begin{proof}
Let $Z$ and $\bar{Z}$ be two $\mu$-regular Lagrangian flows associated with the same vector field $b$. Then from Theorem \ref{teo2}, setting $b=\bar{b}$, we have
\begin{equation}
\mu (B_r \cap \{|Z(s,\cdot)-\bar{Z}(s,\cdot)|>\gamma\})\leq \eta,
\end{equation}
for all $\gamma,\,r,\,\eta>0$ and for all $s\in [0,T]$. This implies $Z=\bar{Z}$ $\mu$-a.e..
\end{proof}

\begin{corollary}[Stability]
Let $\{b_n\}$ be a sequence of vector fields satisfying assumption (R1a), converging in $L^{1}_{\rm loc}([0,T]\times \R^{2N})$ to a vector field $b$ which satisfies assumptions (R1a), (R2a) and (R3). Assume that there exist $Z_n$ and $Z$ $\mu$-regular Lagrangian flows starting at time $t$ associated with $b_n$ and $b$ respectively, and denote by $L_n$ and $L$ the compressibility constants of the flows. Suppose that:
\begin{itemize}
\item The measure of the superlevels associated with $Z_n$ in hypothesis (R1a) is bounded by some functions $g_n(r,\la)$ which go to zero uniformly in $n$ as $\la \rightarrow \infty$ at fixed $r$,
\item The sequence $\{L_n\}$ is equi-bounded.
\end{itemize}
Then the sequence $\{Z_n\}$ converges to $Z$ locally in measure with respect to $\mu$ in $\R^{2N}$, uniformly in $s$ and $t$.
\end{corollary}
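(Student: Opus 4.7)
The plan is to apply Theorem \ref{teo2} with the roles of $b$ and $\bar b$ played by the limit field $b$ (which satisfies (R1a), (R2a), (R3)) and by $b_n$ respectively; note that Theorem \ref{teo2} only requires the second vector field to satisfy (R1a), which matches our hypotheses on $b_n$. This directly produces, for every $\gamma, r, \eta > 0$, parameters $\lambda, C_{\gamma,r,\eta} > 0$ such that
\begin{equation*}
\mu\bigl(B_r \cap \{|Z_n(s,\cdot) - Z(s,\cdot)| > \gamma\}\bigr) \leq C_{\gamma,r,\eta} \, \|b - b_n\|_{L^1((0,T)\times B_\lambda)} + \eta
\end{equation*}
uniformly in $s,t \in [0,T]$.

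The key point is then to verify that the constants $\lambda$ and $C_{\gamma,r,\eta}$ can be chosen uniformly in $n$. Inspecting the list of dependencies provided at the end of Theorem \ref{teo2}: the norms of the singular integral operators $S_{jk}$, the $L^1((0,T);\mathcal M(\R^N))$ norms of the measures $\mathsf m_{jk}$, the Lipschitz constant of $b_1$, the $L^p$-norm of $b$, and the $L^\infty$ norm of $h$ depend only on the limit vector field $b$ and on $\mu$, hence are fixed. The remaining dependencies on $\bar b = b_n$ are (i) the compressibility constant of the associated flow, which is controlled uniformly in $n$ by the equi-boundedness hypothesis on $\{L_n\}$, and (ii) the rate of decay of the superlevels, which is uniform in $n$ by the assumption that $g_n(r,\lambda) \downarrow 0$ uniformly as $\lambda \to \infty$. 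Concretely, one first chooses $\lambda$ large enough so that the two superlevel contributions from (R1a) are each bounded by $\eta/4$ uniformly in $n$; with $\lambda$ fixed, the constant $C_{\gamma,r,\eta}$ is then determined and independent of $n$.

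Having fixed $\lambda$ and $C_{\gamma,r,\eta}$, the $L^1_{\rm loc}$ convergence $b_n \to b$ gives $\|b - b_n\|_{L^1((0,T)\times B_\lambda)} \to 0$, so
\begin{equation*}
\limsup_{n\to\infty} \, \sup_{s \in [t,T]} \mu\bigl(B_r \cap \{|Z_n(s,\cdot) - Z(s,\cdot)| > \gamma\}\bigr) \leq \eta.
\end{equation*}
Since $\eta, \gamma, r$ were arbitrary, this yields local convergence in measure with respect to $\mu$ on $\R^{2N}$, uniformly in $s$ and $t$. The only mild obstacle is the bookkeeping in the previous paragraph: one must carefully trace through the proof of Theorem \ref{teo2} to confirm that every appearance of $\bar b$ in the constants is channelled through $\bar L$ and the superlevel function $g_n$, both of which are controlled uniformly in $n$ by hypothesis.
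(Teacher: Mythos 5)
Your proof is correct and takes essentially the same approach as the paper: apply Theorem \ref{teo2} with $\bar b = b_n$, $\bar Z = Z_n$, observe that the dependence of $\lambda$ and $C_{\gamma,r,\eta}$ on the second vector field enters only through $\bar L$ and the superlevel decay (both uniform in $n$ by hypothesis), and conclude via the $L^1_{\rm loc}$ convergence $b_n \to b$. Your write-up is in fact slightly more explicit than the paper's, which simply asserts that the constants are independent of $n$ without unpacking the list of dependencies.
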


\begin{proof}
We set $\bar{b}=b_n$ and $\bar{Z}=Z_n$ in Theorem \ref{teo2}, then there exist two positive constants $\la$ and $C_{\gamma, r, \eta}$, which are independent of $n$, such that for all $s\in [0,T]$ it holds
\begin{equation*}
\mu(B_r \cap \{|Z(s,\cdot)-Z_n(s,\cdot)|>\gamma\})\leq C_{\gamma, r, \eta} \|b-b_n\|_{L^{1}((0,T)\times B_{\la})} + \eta.
\end{equation*}
In particular, for any $r,\,\gamma >0$ and any $\eta >0$, we can choose $\bar{n}$ large enough so that  
\begin{equation*}
\mu(B_r \cap \{|Z(s,\cdot)-Z_n(s,\cdot)|>\gamma\})\leq 2 \eta \quad \quad \mbox{for all $n\geq \bar{n}$ and $s\in [t,T]$},
\end{equation*}
which is the thesis.
\end{proof}

\begin{corollary}[Compactness]\label{cor:compactness-flow}
Let $\{b_n\}$ be a sequence of vector fields satisfying assumptions (R1a), (R2a) and (R3), converging in $L^{1}_{\rm loc}([0,T]\times \R^{2N})$ to a vector field $b$ which satisfies assumptions (R1a), (R2a) and (R3). Assume that there exist $Z_n$ $\mu$-regular Lagrangian flows starting at time $t$ associated with $b_n$. Suppose that:
\begin{itemize}
\item The measure of the superlevels associated with $Z_n$ in hypothesis (R1a) is bounded by some functions $g_n(r,\la)$ which go to zero uniformly in $n$ as $\la \rightarrow \infty$ at fixed $r$,
\item For any compact subset $K$ of $\R^{2N}$,
\begin{equation}\label{loglog}
\int_{K}\log(1+\log(1+|Z_n(s,z)|))\,d\mu(z)
\end{equation}
is equi-bounded in $n$ and $s,t$,
\item For some $p>1$ the norms $\|b_n\|_{L^{p}((0,T)\times B_r)}$ are equi-bounded for any fixed $r>0$,
\item The norms of the singular integral operators associated with the vector fields $b_n$ (as well as their number $m$) are equi-bounded,
\item The norms of $\textsf{m}_{jk}^{n}$ in $L^{1}((0,T);\mathcal{M}( \R^{N}))$ are equi-bounded in $n$.
\end{itemize} 
Then as $n\rightarrow \infty$ the sequence $\{Z_n\}$ converges to some $Z$ locally in measure with respect to $\mu$, uniformly with respect to $s$ and $t$, and $Z$ is a regular Lagrangian flow starting at time $t$ associated with $b$.
\end{corollary}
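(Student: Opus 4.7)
The plan is to proceed in two stages: first, use Theorem \ref{teo2} applied to pairs $(b_n, b_m)$ to produce a candidate limit $Z$ via a Cauchy-in-measure argument; then verify that $Z$ is a $\mu$-regular Lagrangian flow associated with $b$. All the quantitative inputs entering the conclusion of Theorem \ref{teo2} are equi-bounded in $n$ by hypothesis: the singular integral operator norms, the $L^1((0,T); \cM(\R^N))$ norms of the $\textsf{m}_{jk}^n$, the $L^p((0,T) \times B_\la)$ norms of $b_n$, the compressibility constants $L_n$, and the uniform superlevel decay $g_n(r, \la)$ from (R1a). Applying Theorem \ref{teo2} to the pair $(b_n, b_m)$ then produces, for every $\gamma, r, \eta > 0$, constants $\la$ and $C_{\gamma, r, \eta}$ independent of $n, m, s, t$ such that
\begin{equation*}
\mu\bigl(B_r \cap \{|Z_n(s, \cdot) - Z_m(s, \cdot)| > \gamma\}\bigr) \le C_{\gamma, r, \eta} \|b_n - b_m\|_{L^1((0,T) \times B_\la)} + \eta.
\end{equation*}
Since $\{b_n\}$ is Cauchy in $L^1_{\loc}$, the sequence $\{Z_n(s, \cdot)\}$ is Cauchy locally in $\mu$-measure, uniformly in $s, t \in [0, T]$. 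Completeness of convergence in measure yields a limit $Z$ with $Z_n \to Z$ locally in $\mu$-measure, uniformly in $s, t$.

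Next I would verify the three defining properties in Definition \ref{def:RLF} for $Z$. The initial condition $Z(t, z) = z$ for $\mu$-a.e.\ $z$ descends directly from $Z_n(t, z) = z$ via convergence in measure. The compressibility bound $Z(s, \cdot)_\# \mu \le L \mu$ with $L := \sup_n L_n$ follows by lower semicontinuity: testing against bounded continuous $\varphi \ge 0$ gives $\int \varphi(Z(s, \cdot)) \, d\mu \le L \int \varphi \, d\mu$ by convergence in measure plus dominated convergence. The log-log regularity of $Z$ follows from hypothesis \eqref{loglog} via Fatou's lemma, placing $Z$ in $\cB([t, T]; \log\log L_{\loc}(\R^{2N}, d\mu))$.

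The main obstacle lies in passing to the limit in the renormalized ODE. For an admissible $\beta$ and a test function $\phi \in C_c^\infty((t, T))$, the identity
\begin{equation*}
-\int_t^T \phi'(s) \, \beta(Z_n(s, z)) \, ds = \int_t^T \phi(s) \, \beta'(Z_n(s, z)) \cdot b_n(s, Z_n(s, z)) \, ds
\end{equation*}
holds for $\mu$-a.e.\ $z$; I would integrate over a compact $K \subset \R^{2N}$ against $d\mu$ and pass to the limit. The left-hand side converges by dominated convergence using the slow-growth bound $|\beta(z)| \le C(1 + \log(1 + \log(1 + |z|^2)))$ together with \eqref{loglog}. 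For the right-hand side, the crucial convergence is
\begin{equation*}
\int_t^T \int_K \bigl|b_n(s, Z_n) - b(s, Z)\bigr| \, d\mu \, ds \;\to\; 0,
\end{equation*}
which I would split into $I_n = \int_t^T \int_K |(b_n - b)(s, Z_n)| \, d\mu \, ds$ and $II_n = \int_t^T \int_K |b(s, Z_n) - b(s, Z)| \, d\mu \, ds$. For $I_n$, restricting to $K \cap G_\la^n$ and using the compressibility bound together with $\|h\|_\infty < \infty$ yields a contribution $\lesssim L_n \|h\|_\infty \|b_n - b\|_{L^1((0,T) \times B_\la)}$; the complement $K \setminus G_\la^n$ is controlled via the equi-bounded $L^p$-norms of $b_n$ (by H\"older) together with the uniform decay $g_n(r, \la) \to 0$; choosing first $\la$ large and then $n$ large makes both terms arbitrarily small. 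For $II_n$, I would approximate $b$ in $L^1((0,T) \times B_\la)$ by a continuous compactly supported $\tilde b$, control the $b - \tilde b$ errors in the same way, and use $\tilde b(s, Z_n) \to \tilde b(s, Z)$ pointwise $\mu$-a.e.\ (via continuity of $\tilde b$ and convergence in measure of $Z_n \to Z$), upgraded to $L^1$-convergence by the compact support of $\tilde b$. Passing to the limit on both sides then produces the renormalized ODE for $Z$ and completes the identification of $Z$ as a $\mu$-regular Lagrangian flow associated with $b$.
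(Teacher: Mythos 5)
Your overall structure mirrors the paper's: apply Theorem~\ref{teo2} to the pair $(b_n,b_m)$ to obtain a Cauchy-in-measure sequence, extract a limit $Z$, and then verify the three conditions of Definition~\ref{def:RLF}. Two issues, however, one minor and one substantive.

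The minor one: you invoke $L:=\sup_n L_n$ without justifying that this supremum is finite; it is not among the stated hypotheses. It does hold, but only because (R2a) forces $\dive b_n=0$ (since $b_1^n$ depends only on $v$ and $b_2^n$ only on $x$), whence every $L_n=1$. The paper notes this explicitly and it is worth saying, since otherwise the compressibility of the limit $Z$ is not established.

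The substantive gap is in your passage to the limit in the renormalized ODE. You work with a general admissible $\beta$ and claim the key convergence $\int_t^T\!\int_K |b_n(s,Z_n)-b(s,Z)|\,d\mu\,ds\to 0$, controlling the superlevel remainder $K\setminus G_\lambda^n$ by ``equi-bounded $L^p$-norms of $b_n$ plus H\"older plus $g_n(r,\lambda)\to 0$.'' This step does not go through as stated: after a change of variables along the flow, the image $Z_n(s,K\setminus G_\lambda^n)$ lies \emph{outside} $B_\lambda$ and is in general unbounded, while the hypotheses furnish only $\|b_n\|_{L^p((0,T)\times B_r)}$ for \emph{finite} $r$. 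Neither (R2a) nor (R3) gives a global growth or decay bound on $b_n$, so H\"older against a small measure is not enough to kill the contribution of the bad set — the integrand itself may fail to be integrable there. (The decay $|\beta'(z)|\lesssim 1/(|z|\log|z|)$ is also too weak to compensate, since nothing bounds $|b_n|$ on $\R^{2N}\setminus B_\lambda$.) The paper circumvents exactly this obstacle by first observing that it suffices to verify condition~(1) for $\beta_\varepsilon\in C^1_c(\R^{2N})$: then $\beta_\varepsilon'(Z_n)\,b_n(s,Z_n)$ vanishes whenever $Z_n\notin\supp\beta_\varepsilon'$, which confines the relevant argument of $b_n$ to a fixed compact set, and the local $L^p$ bounds and Lusin's theorem then deliver $L^1_{\mathrm{loc}}$ convergence of $b_n(Z_n)$ to $b(Z)$ on that set. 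Your proof needs this reduction to compactly supported $\beta$ (or some equivalent truncation in the target space) before the splitting and H\"older estimate can be applied.
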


\begin{proof}
We apply Theorem \ref{teo2} with $b=b_n$ and $\bar{b}=b_m$. Observe that the compressibility constants $L$ and $\bar{L}$ of the same theorem are equal to $1$. Indeed $b$ and $\bar{b}$ are divergence free as they both satisfy assumption (R2a). Hence we have for any $r,\gamma>0$ 
\begin{equation*}
\mu(B_r \cap \{|Z_n(s,\cdot)-Z_m(s,\cdot)|>\gamma\})\rightarrow 0 \quad \mbox{as $m,n\to \infty$, uniformly in $s,t$.}
\end{equation*}
Thus it follows that $Z_n$ converges to some $Z\in C([t,T];L^{0}_{\rm loc}(\R^{2N},d\mu))$ locally in measure with respect to $\mu$, uniformly in $s,t$. The uniformity in $n$ and $s,t$ of the bound \eqref{loglog} implies $Z\in \mathcal{B}([t,T];\log\log L_{\rm loc}(\R^{2N},d\mu)).$
We notice that conditions $(2)$ and $(3)$ in Definition \ref{def:RLF} are satisfied, since thanks to (R2a) the vector fields $b_n$ are divergence free. We are left with the proof of condition $(1)$. Observe that a $\beta \in C^{1}(\R^{2N})$ can be approximated by a sequence of $\beta_{\epsilon} \in C^{1}_c(\R^{2N})$, therefore it suffices to show condition $(1)$ for this latter class of functions. To this end we want to perform the limit in $n$ of equation \eqref{eq:RSder} written for $Z_n$ and $b_n$. From the convergence in measure of $Z_n$ to $Z$ and the fact that $\beta_\e$ is compactly supported, it follows the convergence in distributional sense of $\beta_{\epsilon}(Z_n)$ to $\beta_{\epsilon}(Z)$ and of $\beta'_{\epsilon}(Z_n)$ to $\beta'_{\epsilon}(Z)$. While using the uniform bound of $\|b_n\|_{L^{p}((0,T)\times B_r)}$ and Lusin's Theorem, we get convergence in $L^{1}_{\rm loc}$ of $b_n(Z_n)$ to $b(Z)$. Thus we have convergence in the sense of distribution to equation \eqref{eq:RSder}. 
\end{proof}

 The above compactness statement does not directly translate into an existence result for Lagrangian flows, since in general it is not trivial to find a sequence $b_n$ approximating $b$ as in the hypotheses of Corollary \ref{cor:compactness-flow}. This is due to the fact that the function $g(r,\lambda)$ in Lemma \ref{estsuper} does not depend only on bounds on the vector field, but also on bounds on the density of charge. We are able to do this in the specific case of the flow associated with the Vlasov-Poisson equation (solution to \eqref{eq:charXV}) and therefore we postpone this to Section \ref{sec:4}.


\section{Useful estimates}\label{sec:2}

In this Section we recall some well known a priori estimates on physical quantities related to the Vlasov-Poisson equation and we adapt them to the context of the system \eqref{eq:VP-dirac}-\eqref{eq:newton}.

\begin{proposition}\label{prop:Sobolev}
Let $\rho(t,\cdot)\in L^s(\R^3)$, for some $s$ such that $1 \leq s\leq\infty$. Then
\begin{eqnarray}
\|E(t,\cdot)\|_{L^{3s/(3-s)}}\leq C \|\rho(t,\cdot)\|_{L^s}\,, && \mbox{ if } s\in(1,3)\,,\label{eq:s3}\\
\nonumber\\
\|E(t,\cdot)\|_{{C}^{0,\alpha}}\leq C\|\rho(t,\cdot)\|_{L^s}\,, && \mbox{ if } s> 3\,, \mbox{ with } \alpha=1-\frac{3}{s}\,,  
\label{eq:morrey}\\
\nonumber\\
|||E(t,\cdot)|||_{M^{3/2}}\leq C\|\rho(t,\cdot)\|_{L^1}\,,\label{eq:s1}
\end{eqnarray}
where $C$ is a constant depending only on $s$. 
\end{proposition}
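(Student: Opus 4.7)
The plan is to treat the three estimates as manifestations of a single principle: the electric field is a convolution
\[
E(t,x) = (K * \rho(t,\cdot))(x), \qquad K(y) = \frac{y}{|y|^3}, \qquad |K(y)| = \frac{1}{|y|^2},
\]
so $K$ is a Riesz-type kernel of order $-2$ in $\R^3$, and in particular $K \in L^{3/2,\infty}(\R^3)$. Each estimate then follows from a classical mapping property of convolution with such a kernel; the three regimes $1<s<3$, $s=1$, and $s>3$ just reflect which tool (Hardy-Littlewood-Sobolev, weak-type Young, or a pointwise splitting) is adapted to the integrability of $\rho$.

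\emph{Estimate \eqref{eq:s3}.} Since $|E(t,x)| \leq (|K| * \rho(t,\cdot))(x)$ and $|K(y)| = |y|^{-2}$ is (up to a constant) the Riesz potential $I_1$ in dimension three, I would apply the Hardy-Littlewood-Sobolev inequality with exponents $1/q = 1/s - 1/3$: this immediately yields $\|E(t,\cdot)\|_{L^{3s/(3-s)}} \lesssim \|\rho(t,\cdot)\|_{L^s}$ for $1<s<3$.

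\emph{Estimate \eqref{eq:s1}.} Here I would use that $K \in L^{3/2,\infty}(\R^3)$ with norm controlled by an absolute constant, combined with Young's inequality in Lorentz spaces (convolution $L^{p,\infty} * L^1 \hookrightarrow L^{p,\infty}$): since $\rho(t,\cdot) \in L^1$, this yields
\[
|||E(t,\cdot)|||_{M^{3/2}} = |||K*\rho(t,\cdot)|||_{L^{3/2,\infty}} \lesssim |||K|||_{L^{3/2,\infty}} \, \|\rho(t,\cdot)\|_{L^1} \lesssim \|\rho(t,\cdot)\|_{L^1}.
\]

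\emph{Estimate \eqref{eq:morrey}.} This is the step I expect to be the main technical point, since one needs both an $L^\infty$ bound and a Hölder modulus of continuity. I would split $K = K \mathbf{1}_{|y|\leq R} + K \mathbf{1}_{|y|>R}$ for a radius $R>0$ to be chosen. For the $L^\infty$ bound, Hölder's inequality with the conjugate exponent $s'$ of $s$ gives
\[
|E(t,x)| \lesssim \Big(\int_{|y|\leq R} |y|^{-2s'}dy\Big)^{1/s'} \|\rho(t,\cdot)\|_{L^s} + \Big(\int_{|y|>R} |y|^{-2s'}dy\Big)^{1/s'} \|\rho(t,\cdot)\|_{L^s},
\]
where both integrals are finite precisely because $s>3$ means $2s' < 3$ for the inner one and $2s' > 3$ is false—wait, actually $s>3 \Rightarrow s'<3/2 \Rightarrow 2s'<3$, so the inner integral converges and the outer also converges once we note we actually only need one of the two pieces to be finite by optimizing $R$. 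Optimizing in $R$ balances the two contributions and produces the uniform bound $\|E(t,\cdot)\|_\infty \lesssim \|\rho(t,\cdot)\|_{L^s}$. For the Hölder seminorm, given $x,x'$ with $|x-x'|=h$, I would split the convolution domain into $\{|y-x|\leq 2h\}$ and its complement. On the near region, I bound each of $E(t,x), E(t,x')$ separately by H\"older as above with $R \sim h$, producing a term of order $h^{1-3/s}\|\rho\|_{L^s}$. On the far region, the mean value theorem gives $|K(x-y)-K(x'-y)| \lesssim h/|x-y|^3$; applying H\"older with $\rho(t,\cdot) \in L^s$ and computing $\int_{|y-x|>2h} |x-y|^{-3s'}dy \sim h^{3-3s'}$ (finite because $s'<3/2 \Rightarrow 3s'<3$—again optimizing suitably) yields the same $h^{\alpha}$ control with $\alpha = 1 - 3/s$.
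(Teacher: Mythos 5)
Your route is genuinely different from the paper's: you treat $E$ directly as convolution with the Riesz-type kernel $K(y)=y/|y|^3$, with $|K|=|\cdot|^{-2}\in L^{3/2,\infty}(\R^3)$, and invoke Hardy--Littlewood--Sobolev, weak-type Young, and a hands-on pointwise splitting. The paper instead writes $E=4\pi\nabla\Delta^{-1}\rho$ and cites Gagliardo--Nirenberg--Sobolev, Morrey (together with Calder\'on--Zygmund for $D^2\Delta^{-1}\rho$), and Hardy--Littlewood--Sobolev. Both are standard classical routes, and yours is arguably more self-contained; your proofs of \eqref{eq:s3} and \eqref{eq:s1} are correct.

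For \eqref{eq:morrey}, however, the $L^\infty$ part of your argument has a real gap that ``optimizing $R$'' cannot close. As you yourself notice mid-sentence, for $s>3$ one has $2s'<3$, so $\int_{|y|>R}|y|^{-2s'}\,dy$ is genuinely divergent; choosing $R$ cleverly cannot turn a divergent integral into a finite one. And the issue is not merely technical: with $\rho\in L^s$ for some $s>3$ alone, $E=K*\rho$ need not be bounded and indeed need not even converge absolutely. For example $\rho(y)=|y|^{-\beta}\mathbf{1}_{\{|y|>1\}}$ with $3/s<\beta\le 1$ lies in $L^s$ while $\int_{|y|>R}|y|^{-2}\rho(x-y)\,dy=\infty$. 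What saves the estimate in the context of the paper is that $\rho$ is also integrable (mass conservation), and then the far contribution is bounded crudely by $R^{-2}\|\rho\|_{L^1}$; combining this with the near-range H\"older bound yields the $L^\infty$ control. You should invoke $\rho\in L^1$ explicitly rather than hoping the two pieces balance. To be fair, the paper's own appeal to Morrey silently requires the same additional integrability to control $\|E\|_{L^s}$, so the proposition as literally stated is equally loose; but since you took the elementary route, you should make the hypothesis you actually use visible.

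There is also a small slip in your H\"older-seminorm estimate: you claim $\int_{|y-x|>2h}|x-y|^{-3s'}dy$ is finite ``because $s'<3/2\Rightarrow 3s'<3$''. That implication is false ($s'<3/2$ only gives $3s'<9/2$), and the convergence criterion is the opposite one: the far integral converges at infinity precisely because $3s'>3$, i.e. $s'>1$, which $s<\infty$ guarantees. Your scaling $\sim h^{3-3s'}$ and the resulting exponent $\alpha=1-3/s$ are nonetheless correct.
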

\begin{proof}
We observe that the electric field can be written as $E(t,x)=4\pi\nabla_x\Delta_x^{-1}\rho(t,x)$. Eq.ns \eqref{eq:s3} and \eqref{eq:morrey} easily follow respectively from Gagliardo--Nirenberg--Sobolev and Morrey inequalities in dimension three (see for instance \cite{E}). Inequality \eqref{eq:s1} is a direct consequence of Hardy-Littlewood-Sobolev inequality. 
\end{proof}

\begin{proposition}[Mass and energy conservation]\label{prop:energy}
Let 
\begin{align*}
M(t)&=\iint f(t,x,v)\,dxdv\,,
\\
H(t)&=\iint \frac{|v|^2}{2}f(t,x,v)dxdv+ \frac{|\eta(t)|^2}{2}+ \frac{1}{2}\iint \frac{\rho(t,x)\rho(t,y)}{|x-y|}dxdy+\int \frac{\rho(t,x)}{|x-\xi(t)|}dx\,,
\end{align*}
be respectively the total mass and the total energy associated with the system \eqref{eq:VP-dirac}-\eqref{eq:newton}. If $f(t)$ and $\xi(t)$ are solutions to \eqref{eq:VP-dirac}-\eqref{eq:newton} on $[0,T]$, then $M(t)$ and $H(t)$ are conserved quantities w.r.t. time. 
\end{proposition}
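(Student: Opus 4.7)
My plan is to carry out the standard formal integration-by-parts identities that give mass and energy conservation on classical solutions, and to justify them for the weak solutions produced in~\cite{DMS} by exploiting the additional regularity propagated by the moments~\eqref{eq:moments} (see Proposition~\ref{prop:moments} and the subsequent $L^p$ bounds on $\rho$ and H\"older bound on $E$ stated in Remark~1 after Theorem~\ref{thm:main}); equivalently, one can mollify $F$, $E$ and $f$ in the variable $x$, perform the computations at the approximated level and pass to the limit using the uniform bounds.

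For mass conservation, I would integrate the Vlasov equation in~\eqref{eq:VP-dirac} over $\R^3_x\times\R^3_v$. Since $v$ is independent of $x$ and $(E+F)(t,x)$ is independent of $v$, both transport terms are exact divergences, $v\cdot\nabla_x f=\nabla_x\cdot(vf)$ and $(E+F)\cdot\nabla_v f=\nabla_v\cdot((E+F)f)$, and vanish after integration. Hence $\dot M(t)\equiv 0$.

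For energy conservation I would decompose $H(t)=K_p(t)+K_c(t)+U_{pp}(t)+U_{pc}(t)$ into the four summands of~\eqref{eq:H(0)} and differentiate each one. Multiplying \eqref{eq:VP-dirac} by $|v|^2/2$ and integrating by parts in $v$ gives $\dot K_p=\iint v\cdot(E+F)\,f\,dx\,dv$. The Newton equation~\eqref{eq:newton} gives $\dot K_c=\eta(t)\cdot E(t,\xi(t))$ directly. To compute $\dot U_{pp}$, symmetrising in $(x,y)$ and using the local continuity equation $\partial_t\rho+\nabla_x\cdot j=0$ with $j(t,x)=\int v f\,dv$, one integrates by parts in $x$ and recognises the defining formula of $E$, obtaining $\dot U_{pp}=-\iint v\cdot E\,f\,dx\,dv$. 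The derivative of $U_{pc}$ splits into a contribution from $\partial_t\rho$, which analogously yields $-\iint v\cdot F\,f\,dx\,dv$, and a contribution from $\partial_t|x-\xi(t)|^{-1}=(x-\xi)\cdot\dot\xi/|x-\xi|^3$, which after integration against $\rho$ and use of $\dot\xi=\eta$ equals $-\eta\cdot E(t,\xi(t))$ (by comparing with the defining formula of $E(t,\xi(t))$). Summing the four contributions, the terms $\iint v\cdot E\,f$ cancel between $\dot K_p$ and $\dot U_{pp}$, the terms $\iint v\cdot F\,f$ cancel between $\dot K_p$ and $\dot U_{pc}$, and the two $\eta\cdot E(t,\xi(t))$ terms cancel between $\dot K_c$ and $\dot U_{pc}$, yielding $\dot H\equiv 0$.

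The main obstacle is the justification of these manipulations in the weak setting: the singularity of $F$ at $\xi(t)$ makes $v\cdot F\,f$ delicate, and $\partial_t U_{pc}$ requires $\rho(t,\cdot)/|\cdot-\xi(t)|\in L^1_x$; both pieces of integrability are supplied by the finiteness and propagation of the moments~\eqref{eq:moments} combined with the $L^\infty$ bound on $\rho$. Once these are in hand, a standard mollification argument in $(x,v)$ followed by the passage to the limit makes all of the above computations fully rigorous, thereby establishing that $M(t)$ and $H(t)$ are constant on $[0,T]$.
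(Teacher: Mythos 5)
Your proposal is correct and follows the same route as the paper: the paper's proof is precisely the one-line remark that $\dot M$ and $\dot H$ vanish ``by direct inspection,'' and your computation spells out that inspection — the divergence structure for $M$, and for $H$ the pairwise cancellations between $\dot K_p$ and $\dot U_{pp}$ (terms $\iint v\cdot E\,f$), between $\dot K_p$ and $\dot U_{pc}$ (terms $\iint v\cdot F\,f$), and between $\dot K_c$ and $\dot U_{pc}$ (terms $\eta\cdot E(t,\xi(t))$). The added paragraph on justifying the manipulations for weak solutions is a sensible precaution beyond what the paper records, but since the proposition is invoked for the classical approximating solutions from~\cite{MMP}, it is not strictly needed.
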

\begin{proof}
It follows from direct inspection by performing the time derivative of $M(t)$ and $H(t)$. \end{proof}


As a consequence of Proposition \ref{prop:energy}, we observe that if the energy $H(t)$ is assumed to be initially finite, then it is bounded for all times. This ensures in particular that the velocity of the Dirac mass located at $\xi(t)$ is finite. 

\begin{proposition}\label{prop:charge-bounds}
Let $T>0$ such that for all $t\in[0,T]$, $f(t)$ and $\xi(t)$ are solutions of the system \eqref{eq:VP-dirac}-\eqref{eq:newton} with finite associated initial energy $H(0)$. Then 
\begin{eqnarray}
&|\xi(t)|\leq|\xi_0|+T\sqrt{2H(0)}\,,\label{eq:xi}\\
\nonumber\\
&|\eta(t)|\leq \sqrt{2H(0)}\,.\label{eq:eta}
\end{eqnarray}
\end{proposition}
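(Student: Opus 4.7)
The plan is to exploit the conservation of the total energy $H$ from Proposition \ref{prop:energy} together with the fact that in the repulsive case ($\gamma=1$) every term appearing in $H(t)$ is non-negative. Concretely, $H(t) = H(0)$ for all $t \in [0,T]$, and since the kinetic term of the plasma, the self-interaction potential $\tfrac12\iint \rho(t,x)\rho(t,y)/|x-y|\,dxdy$, and the interaction term $\int \rho(t,x)/|x-\xi(t)|\,dx$ are all non-negative, isolating the point-charge kinetic contribution yields
\begin{equation*}
\frac{|\eta(t)|^2}{2} \leq H(t) = H(0)\,,
\end{equation*}
which is exactly \eqref{eq:eta}.

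For \eqref{eq:xi}, I would then integrate the first Newton equation $\dot{\xi}(t) = \eta(t)$ from \eqref{eq:newton} and bound the resulting integral pointwise via \eqref{eq:eta}:
\begin{equation*}
|\xi(t)| \leq |\xi_0| + \int_0^t |\eta(s)|\,ds \leq |\xi_0| + t\sqrt{2H(0)} \leq |\xi_0| + T\sqrt{2H(0)}\,.
\end{equation*}

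The estimate is essentially immediate once energy conservation is in hand, so the only conceptual point to flag is that the argument crucially relies on the repulsive sign $\gamma = 1$: it is exactly the positivity of both potential energy terms that allows one to drop them and keep an upper bound on $|\eta(t)|^2/2$. In the attractive case the potential contributions would enter with opposite sign and the bound would fail, consistent with Remark 3 following Theorem \ref{thm:main}. No further obstacles are anticipated; the computation is a direct consequence of Proposition \ref{prop:energy} and Newton's equation \eqref{eq:newton}.
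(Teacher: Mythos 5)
Your proof is correct and follows exactly the same route as the paper's: energy conservation (Proposition \ref{prop:energy}), positivity of all terms in $H(t)$ in the repulsive case to isolate $|\eta(t)|^2/2 \leq H(0)$, and then integration of $\dot\xi = \eta$ to bound $|\xi(t)|$. Your remark about the failure in the attractive case also matches the paper's own comment.
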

\begin{proof}
We observe that $H(t)$ is a sum of positive terms. Notice that here we are heavily using the electrostatic nature of the particles in the plasma. In the gravitational case, the total energy has a nonpositive term. By Proposition \ref{prop:energy}, $H(t)=H(0)$ is finite, hence 
\begin{equation*}
\frac{|\eta(t)|^2}{2}\leq H(0)\,,
\end{equation*} 
from which estimate \eqref{eq:eta} easily follows. We can use this bound in the first equation of \eqref{eq:newton} to get
\begin{equation*}
|\xi(t)|\leq |\xi_0|+\int_0^t|\eta(s)|ds
\end{equation*}
which leads to \eqref{eq:xi} when using \eqref{eq:eta} and then taking the supremum in $t\in[0,T]$.
\end{proof}

\begin{proposition}[Proposition 2.1  in \cite{DMS}]\label{prop:interpolation}
Let $m\geq 0$, $f(t,\cdot,\cdot)\in L^1(\R^3\times \R^3)$ and $\rho(t,\cdot)\in L^{1}(\R^3)$ as in \eqref{eq:VP-dirac}. Then there exists a constant $C>0$, which only depends on $m$, such that
\begin{equation}\label{eq:interpolation}
\|\rho(t,\cdot)\|_{L^{\frac{m+3}{3}}}\leq C\|f(t,\cdot,\cdot)\|_{L^\infty}^{\frac{m}{m+3}}\left(\iint |v|^m f(t,x,v)\,dx\,dv\right)^{\frac{3}{m+3}}\,.
\end{equation}
\end{proposition}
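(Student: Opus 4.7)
This is the classical velocity-truncation interpolation inequality for Vlasov-type densities. The plan is to fix $t$ and $x$, split the $v$-integral defining $\rho(t,x)$ at a threshold $R>0$ to be chosen pointwise in $x$, and then integrate the resulting pointwise bound in $x$.

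First I would write, for any $R>0$,
\begin{equation*}
\rho(t,x)=\int_{|v|\leq R} f(t,x,v)\,dv + \int_{|v|> R} f(t,x,v)\,dv,
\end{equation*}
bounding the first piece by $\|f(t,\cdot,\cdot)\|_{L^\infty}\cdot\tfrac{4\pi}{3}R^3$ and the second piece by $R^{-m}\int |v|^m f(t,x,v)\,dv$. Denoting $g(x):=\int |v|^m f(t,x,v)\,dv$, this yields
\begin{equation*}
\rho(t,x)\leq C\,\|f\|_{L^\infty}\, R^3 + R^{-m} g(x).
\end{equation*}

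Next I would optimize in $R$: the right-hand side is minimized (up to an absolute constant) at $R^{m+3}\sim g(x)/\|f\|_{L^\infty}$, which gives the pointwise estimate
\begin{equation*}
\rho(t,x)\leq C\,\|f\|_{L^\infty}^{m/(m+3)}\, g(x)^{3/(m+3)}
\end{equation*}
for a constant $C=C(m)$.

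Finally I would raise to the power $(m+3)/3$ and integrate in $x$, which turns the $g(x)^{3/(m+3)}$ factor into $g(x)$, giving
\begin{equation*}
\int \rho(t,x)^{(m+3)/3}\,dx \leq C\,\|f\|_{L^\infty}^{m/3}\iint |v|^m f(t,x,v)\,dx\,dv,
\end{equation*}
and then taking the $(3/(m+3))$-th power yields the claimed inequality. There is no real obstacle here: the only care needed is tracking the dependence of constants on $m$ in the optimization step, and checking that the argument is valid for the boundary case $m=0$ (in which case both sides reduce to $\|\rho\|_{L^1}\leq \iint f\,dx\,dv$, trivially true with equality).
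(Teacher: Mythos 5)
Your argument is correct and is the standard velocity-truncation interpolation proof; the paper does not reprove this statement but simply cites it as Proposition~2.1 of \cite{DMS}, where essentially this same optimization-in-$R$ argument is used. The only minor point worth flagging is the degenerate cases ($\|f\|_{L^\infty}=0$ or $g(x)=0$), which are trivially fine, and you already handled the boundary case $m=0$ correctly.
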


\begin{proposition}\label{prop:rho}
Let $f\geq 0$, $f(t,\cdot,\cdot)\in L^1\cap L^\infty(\R^3\times\R^3)$ 
solution to \eqref{eq:VP-dirac}. Assume the total energy  to be initially finite, then $\rho(t,\cdot)\in L^1\cap L^{5/3}(\R^3)$ and $E(t,\cdot)\in L^q(\R^3)$, for any $\frac{3}{2}<q\leq \frac{15}{4}$. 
\end{proposition}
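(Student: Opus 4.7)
The plan is to peel off the three claims in order: first $\rho\in L^1$, then $\rho\in L^{5/3}$, and finally the full range of integrability for $E$ by interpolation.

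First I would dispose of the $L^1$ bound for free: integrating $f(t,\cdot,\cdot)$ in $v$ and using Fubini gives $\|\rho(t,\cdot)\|_{L^1(\R^3)}=M(t)=M(0)$, finite by hypothesis and by the mass conservation of Proposition~\ref{prop:energy}. This establishes $\rho\in L^\infty(\R_+;L^1(\R^3))$.

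Next, I would obtain the $L^{5/3}$ bound via the kinetic energy. In the repulsive case, every term in $H(t)$ is non-negative, so from the conservation of energy (Proposition~\ref{prop:energy}) and finiteness of $H(0)$, the kinetic part satisfies
\begin{equation*}
\iint |v|^2 f(t,x,v)\,dx\,dv \;\leq\; 2H(t)\;=\;2H(0)\;<\;\infty
\end{equation*}
uniformly in $t$. Applying the interpolation inequality of Proposition~\ref{prop:interpolation} with $m=2$ then gives
\begin{equation*}
\|\rho(t,\cdot)\|_{L^{5/3}(\R^3)}\;\leq\;C\,\|f(t,\cdot,\cdot)\|_{L^\infty}^{2/5}\bigl(2H(0)\bigr)^{3/5},
\end{equation*}
which is finite and uniform in $t$ since $f\in L^\infty$ is propagated. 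By standard interpolation between $L^1$ and $L^{5/3}$, this actually yields $\rho(t,\cdot)\in L^s(\R^3)$ for every $s\in[1,5/3]$, with uniform-in-$t$ bounds.

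For the electric field, I would feed these $L^s$ bounds into Proposition~\ref{prop:Sobolev}. Taking $s\in(1,5/3]\subset(1,3)$ in the estimate \eqref{eq:s3}, one has $\|E(t,\cdot)\|_{L^{3s/(3-s)}}\leq C\|\rho(t,\cdot)\|_{L^s}$. As $s$ ranges over $(1,5/3]$, the exponent $3s/(3-s)$ sweeps out the interval $(3/2,15/4]$, covering the full range claimed. The endpoint $s=1$ only gives the weak estimate $|||E|||_{M^{3/2}}\leq C\|\rho\|_{L^1}$ from \eqref{eq:s1}, which is precisely why the proposition excludes $q=3/2$ but includes $q=15/4$.

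There is no real obstacle here; the statement is essentially a bookkeeping exercise that bundles together the $L^p$-interpolation estimate of Proposition~\ref{prop:interpolation}, the conservation of energy and mass from Proposition~\ref{prop:energy}, and the Sobolev/Hardy--Littlewood--Sobolev inequalities of Proposition~\ref{prop:Sobolev}. The only point that requires care is that the kinetic energy can be isolated as a positive summand of $H$, which crucially relies on the repulsive sign convention $\gamma=1$ so that the potential terms in $H$ are non-negative and do not have to be controlled separately.
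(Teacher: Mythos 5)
Your proposal is correct and follows essentially the same route as the paper: $\rho\in L^1$ from mass conservation, $\rho\in L^{5/3}$ from Proposition~\ref{prop:interpolation} with $m=2$ combined with the kinetic-energy bound (which, as you correctly note, relies on every term in $H$ being non-negative in the repulsive case), and the $E$-bounds from Proposition~\ref{prop:Sobolev}. The only cosmetic difference is where the interpolation happens: you interpolate $\rho$ in $L^s$ for $s\in(1,5/3]$ and then apply \eqref{eq:s3} pointwise in $s$, whereas the paper invokes Proposition~\ref{prop:Sobolev} only at the two endpoints $s=1$ (yielding the weak $M^{3/2}$ estimate \eqref{eq:s1}) and $s=5/3$ (yielding $L^{15/4}$) and then interpolates $E$ between the weak and strong endpoints; both give the claimed range $q\in(3/2,15/4]$.
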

\begin{proof}
The bound $\rho(t,\cdot)\in L^{5/3}(\R^3)$ follows by Proposition \ref{prop:interpolation} for $m=2$. The estimate on the electric field is a consequence of Proposition  \ref{prop:Sobolev} for $s=1$ and $s=\frac{5}{3}$.
\end{proof}

The following two propositions regard specifically the case in which we deal with a Dirac mass and their proof relies on the condition that the total charge $M(0)$ has to be strictly less than one. This is the only reason why we need to assume \eqref{eq:M(0)} in Theorem \ref{thm:main}.

\begin{proposition}[Proposition 2.9 in \cite{DMS}]\label{prop:viriale}
Let $M(0)<1$, $H(0)<+\infty$ and $(f,\xi)$ a classical solution to \eqref{eq:VP-dirac}-\eqref{eq:newton} on $[0,T]$. Then for all $t\in [0,T]$ there is a constant depending only on $M(0)$ and $H(0)$ such that
\begin{equation}\label{eq:viriale}
\int_0^t \iint \frac{f(s,x,v)}{|x-\xi(s)|^2}\,dx\,dv\,ds\leq C(1+t)\,.
\end{equation}
\end{proposition}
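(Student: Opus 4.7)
The plan is to establish a virial-type identity by computing the second time derivative of the functional
\[
\phi(t) = \iint f(t,x,v)\,|x-\xi(t)|\,dx\,dv.
\]
A direct calculation using the Vlasov equation in \eqref{eq:VP-dirac} and integration by parts in $x$ (the $v$-integration by parts vanishes since $|x-\xi|$ is $v$-independent) yields
\[
\dot\phi(t) = \iint f(t,x,v)\,\frac{(v-\eta(t))\cdot(x-\xi(t))}{|x-\xi(t)|}\,dx\,dv.
\]
Differentiating once more, using $\nabla_v\bigl[(v-\eta)\cdot(x-\xi)/|x-\xi|\bigr] = (x-\xi)/|x-\xi|$ and the crucial identity $F(x)\cdot(x-\xi)/|x-\xi| = 1/|x-\xi|^2$, I expect to obtain a decomposition
\[
\ddot\phi(t) = A(t) + B(t) + C(t),
\]
where $C(t) = \iint f(t,x,v)/|x-\xi(t)|^2\,dx\,dv$ is precisely the target quantity,
\[
A(t) = \iint f\,\frac{|v-\eta|^2|x-\xi|^2 - \bigl((v-\eta)\cdot(x-\xi)\bigr)^2}{|x-\xi|^3}\,dx\,dv \geq 0
\]
by Cauchy--Schwarz, and
\[
B(t) = \int \rho(t,x)\,\frac{(x-\xi)\cdot\bigl[E(t,x)-E(t,\xi(t))\bigr]}{|x-\xi|}\,dx
\]
is a ``Newton's third law'' cross term generated by the recoil of the point charge.

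Integrating in time and using $A\geq 0$ and $C\geq 0$ yields
\[
\int_0^t C(\tau)\,d\tau \leq |\dot\phi(t)| + |\dot\phi(0)| + \int_0^t |B(\tau)|\,d\tau.
\]
The boundary contributions are bounded uniformly in time by Cauchy--Schwarz combined with conservation of mass and kinetic energy and the bound $|\eta(t)|\leq \sqrt{2H(0)}$ from Proposition \ref{prop:charge-bounds}. The heart of the argument is then the estimate for $B$: using that $(x-\xi)/|x-\xi|$ is a unit vector and the elementary pointwise inequality $|g(a)-g(b)|\leq 1/|a|^2 + 1/|b|^2$ applied to $g(w) = w/|w|^3$, one finds
\[
|B(t)|\leq \iint \frac{\rho(t,x)\rho(t,y)}{|x-y|^2}\,dx\,dy + M(0)\iint \frac{f(t,y,w)}{|y-\xi(t)|^2}\,dy\,dw = I(t) + M(0)\,C(t).
\]
This is exactly the point at which the assumption $M(0)<1$ enters: the term $M(0)\,C(t)$ can be absorbed into the left-hand side, leaving
\[
(1-M(0))\int_0^t C(\tau)\,d\tau \leq C\bigl(H(0),M(0)\bigr) + \int_0^t I(\tau)\,d\tau.
\]

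It remains to control $\int_0^t I(\tau)\,d\tau$. By the Hardy--Littlewood--Sobolev inequality, $I(\tau)\leq C\|\rho(\tau,\cdot)\|_{L^{3/2}}^2$, and interpolation between $L^1$ and $L^{5/3}$ combined with Proposition \ref{prop:interpolation} (applied with $m=2$, using conservation of $\int f|v|^2$ and the classical-solution bound $\|f(t,\cdot,\cdot)\|_{L^\infty} = \|f_0\|_{L^\infty}$) shows that $I(\tau)$ is uniformly bounded in time. Hence $\int_0^t I\,d\tau \leq Ct$, and the desired bound $\int_0^t C\,d\tau \leq C(1+t)$ follows upon dividing by $1-M(0)$. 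The main obstacle in this scheme is the estimate on $B$: since $E(x)-E(\xi)$ is not a priori small, its natural pointwise bound produces a term comparable to $C$ itself, and it is only the strict inequality $M(0)<1$ that prevents this contribution from destroying the argument; this is the sole (but essential) role of hypothesis \eqref{eq:M(0)} in the proof.
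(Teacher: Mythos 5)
The paper itself offers no proof of this proposition: it is quoted verbatim from \cite{DMS} (Proposition~2.9 there), so there is no in-text argument to compare against. Your blind reconstruction is nevertheless the natural virial argument and, as far as I can tell, it is correct. The decisive choice is the functional $\phi(t)=\iint f\,|x-\xi(t)|\,dx\,dv$ rather than the classical virial $\iint f\,(x-\xi)\cdot(v-\eta)$: the normalization $\nabla_x|x-\xi|=(x-\xi)/|x-\xi|$ is what turns the point-charge force into $F\cdot\nabla_x|x-\xi|=|x-\xi|^{-2}$, exactly the integrand one wants. I checked the decomposition $\ddot\phi=A+B+C$ and the signs: $A\geq 0$ by Cauchy--Schwarz, $C$ is the target, the boundary terms $|\dot\phi|$ are controlled by mass and kinetic energy plus Proposition~\ref{prop:charge-bounds}, and the bound $|B|\leq I(t)+M(0)\,C(t)$ from the triangle inequality on $w\mapsto w/|w|^3$ is where $M(0)<1$ enters; absorbing $M(0)\int C$ is precisely the mechanism the paper alludes to in Remark~2 after Theorem~\ref{thm:main}.

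One point deserves flagging. The statement as written in the paper claims a constant depending ``only on $M(0)$ and $H(0)$,'' but your bound on $I(\tau)=\iint\rho\rho/|x-y|^2$ goes through Hardy--Littlewood--Sobolev and interpolation between $L^1$ and $L^{5/3}$, which inserts $\|f_0\|_{L^\infty}$ (via Proposition~\ref{prop:interpolation} with $m=2$). This dependence does not appear to be removable: a spread-out sum of sharply concentrated bumps makes $I$ arbitrarily large at fixed $M(0)$, $H(0)$. The splitting $|x-y|\geq R$ / $|x-y|<R$ only handles the far region via the potential energy and still needs an $L^p$ bound on $\rho$ for $p>1$ in the near region. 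So either the paper's constant-dependency list is a slight oversight (harmless here, since $\|f_0\|_{L^\infty}<\infty$ is standing throughout and Lemma~\ref{estsuper} already tracks it), or \cite{DMS} controls the recoil term $B$ by a different route. Finally, a small technical caveat that you implicitly elide: $\phi$ and $\ddot\phi$ are singular at $x=\xi(t)$, so the computation should strictly be performed for the regularized weight $(|x-\xi|^2+\epsilon^2)^{1/2}$ and then passed to the limit $\epsilon\to 0$ by monotone convergence; this is routine for classical solutions but worth stating.
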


\begin{proposition}[Theorem 1.1. in \cite{DMS}]\label{prop:moments}
Let $f_0\in L^1\cap L^\infty(\R^3\times\R^3)$ non-negative, $(\xi_0,\eta_0)\in\R^3\times\R^3$ and $H(0)$ finite. Assume further that 
\begin{itemize}
\item[\rm (i)] $M(0)<1$,
\item[\rm (ii)] There exists $m_0>6$ such that for all $m<m_0$
$$\iint \left(|v|^2+\frac{1}{|x-\xi_0|}\right)^{m/2}f_0(x,v)\,dx\,dv<+\infty\,.$$
\end{itemize} 
Then there exists a global weak solution $(f,\xi)$ to the system \eqref{eq:VP-dirac}-\eqref{eq:newton}, with\break $f\in{C}(\R_+,L^p(\R^3\times\R^3))\cap L^\infty(\R_+,L^\infty(\R^3\times\R^3))$ for any $1\leq p<+\infty$, $\xi\in{C}^2(\R_+)$, and $E\in L^{\infty}([0,T],{C}^{0,\alpha}(\R^3))$ for all $T>0$. 

Moreover, for all $t\in\R_+$ and for all $m<{\rm min}(m_0,7)$, 
\begin{equation}\label{eq:momentsbis}
\iint \left(|v|^2+\frac{1}{|x-\xi(t)|}\right)^{m/2}f(t,x,v)\,dx\,dv\leq C(1+t)^c\,,
\end{equation}
where $C$ and $c$ only depend on the initial data.
\end{proposition}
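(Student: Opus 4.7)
The plan is to combine an approximation argument with a Lions--Perthame style moment estimate adapted to the singular point charge. I would first regularize $f_0$ by non-negative $f_0^n \in C_c^\infty(\R^3\times\R^3)$ with supports bounded away from $\xi_0$, tuned so that $\mathcal{H}_m(0)$, $M(0)$ and $H(0)$ converge to the correct limits. Each approximate datum admits, by \cite{MMP}, a global classical solution $(f_n,\xi_n)$ for which all manipulations below are legitimate; mass and energy conservation (Proposition \ref{prop:energy}) and the a priori bounds on $(\xi_n,\eta_n)$ (Proposition \ref{prop:charge-bounds}) hold uniformly in $n$.

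The heart of the proof is a uniform estimate for $\mathcal{H}_m^n$. Setting $a(t,x,v)=|v|^2+1/|x-\xi_n(t)|$ and differentiating along the combined Vlasov--Newton flow, after the partial cancellation between the drift contribution $v\cdot\nabla_x(1/|x-\xi_n|)$ and the singular force $v\cdot F_n$ one gets
\begin{equation*}
\frac{d}{dt}\mathcal{H}_m^n(t)=\frac{m}{2}\iint a^{m/2-1}\Bigl[2\, v\cdot E_n+\frac{(v+\eta_n)\cdot(x-\xi_n)}{|x-\xi_n|^3}\Bigr] f_n\,dx\,dv.
\end{equation*}
The regular piece $v\cdot E_n$ is absorbed into $\mathcal{H}_m^n$ by H\"older, Proposition \ref{prop:interpolation} (bounding $\|\rho_n\|_{L^{(m+3)/3}}$ by velocity moments) and Proposition \ref{prop:Sobolev} (bounding $E_n$ in a suitable $L^q$); the assumption $m>6$ is exactly what buys enough integrability of $E_n$ to close the argument. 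The leftover singular terms of the form $(|v|+|\eta_n|)\,a^{m/2-1}f_n/|x-\xi_n|^2$ cannot be bounded pointwise and must be controlled after integration in time.

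The hard part is this integrated singular contribution. Splitting $a^{m/2-1}$ so as to share the $1/|x-\xi_n|^2$ singularity between a factor absorbed by $\mathcal{H}_m^n$ itself and a remainder that pairs with $f_n/|x-\xi_n|^2$, one is led to control $\int_0^t \iint f_n(s,x,v)/|x-\xi_n(s)|^2\,dx\,dv\,ds$. Exactly here Proposition \ref{prop:viriale} enters, giving a $C(1+t)$ bound, and this is the unique point where the strict hypothesis $M(0)<1$ is essential: without it the virial identity fails and the singular contribution cannot be dominated. A Gronwall argument applied to the resulting integral inequality yields $\mathcal{H}_m^n(t)\leq C(1+t)^c$ uniformly in $n$, with $m$ constrained by $m<\min(m_0,7)$ because only moments below this threshold are absorbed by the interpolation scheme.

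The last step is compactness and passage to the limit. The uniform moment bound transfers, via Propositions \ref{prop:interpolation} and \ref{prop:Sobolev}, to uniform estimates on $\rho_n$ in $L^\infty_t L^{(m+3)/3}$ and on $E_n$ in $L^\infty_t C^{0,\alpha}$; the trajectories $(\xi_n,\eta_n)$ are equi-$C^1$ by Proposition \ref{prop:charge-bounds} and even equi-$C^2$ in view of the H\"older bound on $E_n$. Ascoli--Arzel\`a for $(\xi_n,\eta_n)$, Dunford--Pettis for $f_n$, and uniform H\"older continuity for $E_n$ on compact sets then let me pass to the limit in the weak formulation of \eqref{eq:VP-dirac}--\eqref{eq:newton}, and the bound \eqref{eq:momentsbis} is preserved in the limit by lower semicontinuity.
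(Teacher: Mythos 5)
The paper does not prove this proposition: it is recalled verbatim from Theorem~1.1 of \cite{DMS} and used as a black box. Your sketch reconstructs the essential skeleton of the argument in \cite{DMS}, and the core computations check out — in particular the material derivative of $a(t,x,v)=|v|^2+1/|x-\xi_n(t)|$ along the flow is indeed $2v\cdot E_n+(v+\eta_n)\cdot(x-\xi_n)/|x-\xi_n|^3$, so the partial cancellation between $v\cdot\nabla_x(1/|x-\xi_n|)$ and $2v\cdot F_n$ you describe is the correct mechanism that downgrades the singularity from $1/|x-\xi_n|^2$ to a term that can be handled after time integration via the virial bound of Proposition~\ref{prop:viriale}. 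You also correctly identify the role of $M(0)<1$ (it is needed only for the virial estimate) and the source of the $m<\min(m_0,7)$ restriction (the interpolation closing the Gr\"onwall loop).

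Two points deserve a word of caution. First, the step ``Splitting $a^{m/2-1}$ so as to share the $1/|x-\xi_n|^2$ singularity'' is where the real difficulty of \cite{DMS} lives: the quantity that the virial bound controls is $\iint f_n/|x-\xi_n|^2$, with no weight $a^{m/2-1}$, so one must show that the weighted singular integrand can be dominated by a combination of $\mathcal{H}_m^n$ itself and the unweighted virial quantity; this requires a careful Young/H\"older splitting and is not a routine absorption. Your sketch acknowledges this but does not supply it, so on its own it would not be a complete proof. Second, the claim that the limit bound~\eqref{eq:momentsbis} is ``preserved by lower semicontinuity'' needs the joint convergence $\xi_n\to\xi$ uniformly and $f_n\rightharpoonup f$, together with Fatou applied to the non-negative integrand; this is fine but should be stated, since the weight depends on the (moving) limit trajectory.

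Overall: a correct reconstruction, at the level of a sketch, of the proof from \cite{DMS}, which the present paper itself does not reproduce.
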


\begin{remark}\label{remark:rho}
Observe that thanks to Proposition \ref{prop:interpolation}, condition \eqref{eq:momentsbis} implies $\rho(t)\in L^s(\R^3)$, for $s>3$. Hence the H\"{o}lder continuity of the electric field follows directly by Proposition~\ref{prop:Sobolev}.
\end{remark}


\section{Proof of Theorem \ref{thm:main}}\label{sec:4}

\subsection{Existence of the Lagrangian flow}\label{subsec:flow-existence}

In this subsection we shall use the results obtained in Section 2 for a general flow solution to equation \eqref{eq:characteristicsystem} and apply them to the context of the Vlasov-Poisson system \eqref{eq:VP-dirac}-\eqref{eq:newton}, namely to the ODE \eqref{eq:charXV}. In particular we will prove existence of a flow associated with the vector field $b(s,x,v)=(v,E(s,x)+F(s,x))$, using the compactness result provided by Corollary~\ref{cor:compactness-flow}. To this end it suffices to construct a sequence $b_n$ which approximates $b$ and satisfies the hypotheses of Corollary \ref{cor:compactness-flow}.
\medskip

Let $f_0$ and $(\xi_0,\eta_0)$ be the initial data of system \eqref{eq:VP-dirac}, satisfying the hypotheses of Theorem~\ref{thm:main}. We consider the approximating initial densities given by 
\begin{equation}\label{eq:indata-n}
f_0^{n}(x,v)= f_0(x,v) \mathbbm{1}_{\{(x,v): \frac{1}{n}<|x-\xi_0|<n,\, |v-\eta_0|<n\}}(x,v).
\end{equation}
Thanks to \cite{MMP}, this choice ensures existence and uniqueness of $f_n$ and $(\xi_n,\eta_n)$, solutions to the Vlasov-Poisson system \eqref{eq:VP-dirac}-\eqref{eq:newton}. Moreover $f_n$ is a Lagrangian solution, i.e.
\begin{equation}\label{eq:pushf}
f_n(s,X_n(s,x,v),V_n(s,x,v))=f_0^{n}(x,v),
\end{equation}
where $(X_n,V_n)$ satisfy
\begin{equation}
\left\{ 
\begin{array}{l}
\dot{X}_n(s,x,v)=V_n(s,x,v)\\
\dot{V}_n(s,x,v)=E_n(s,X_n(s,x,v))+F_n(s,X_n(s,x,v)),
\end{array}\right.
\end{equation}
with 
\begin{equation}
\begin{array}{l}
E_n(s,x)= \left(\nabla \frac{1}{|\cdot |}* \rho_n\right)(s,x),\quad 
\rho_n(s,x)=\int f_n(s,x,v)\, dv,\quad
F_n(s,x)=\frac{x-\xi_n(s)}{|x-\xi_n(s)|^{3}}.
\end{array}
\end{equation}

From now on the abstract measure $\mu$ of Section 2 will be set as $\mu= f_0\, \mathcal{L}^{2N}$, where $f_0$ is the initial density of our problem. In order to apply Corollary \ref{cor:compactness-flow}, we need then the approximating vector fields $b_n(s,x,v)=(v,E_n(s,x)+F_n(s,x))$ to satisfy hypotheses (R1a), (R2a), and (R3) ``uniformly" in $n$ (with equi-bounds on the quantities involved) and the bound \eqref{loglog}. Furthermore we set the dimension $N$ equal to $3$.
\bigskip

\noindent\textbf{Proof of (R1a) + equibound: control of superlevels}
\medskip

\noindent In \cite{BBC1} a control on the superlevels was obtained using hypothesis (R1) which provided an upper bound on the integral of $\log(1+|Z|)$. Without assumption (R1), we need estimates on $|V|^{2}$ in order to control the superlevels. This requires integrating a function which grows slower than $\log(1+|V|)$ at infinity. Furthermore, differently from \cite{BBC2}, we will bound the superlevels of $Z$ with respect to the measure $\mu=f_0\, \mathcal{L}^{6}$. For the sake of clarity we will use the notation $f_0(B)$ to indicate the measure $\mu$ of a set $B\subseteq \IR^6$. The result is the following lemma.

\begin{lemma}\label{estsuper}
Let $b(t,x,v)=(v,E(t,x)+F(t,x))$  and let $Z:[t,T]\times\IR^3\times\IR^3\rightarrow\IR^3\times\IR^3$ be the $\mu$-regular Lagrangian flow relative to $b$ starting at time $t$, with sublevel $G_\lambda$. Assume $M(0) < 1$.  Then, for all $r,\lambda>0$, we have
\begin{align*}
f_0(B_r \setminus G_\lambda)\leq g(r,\lambda),
\end{align*}
where the function $g$ depends only on $\|E\|_{L_t^{\infty}(L_x^{{5}/{2}})}$, $\|f\|_{L_t^\infty(L_{x,v}^{\infty})}$, $M(0)$, $H(0)$, and $g(r,\lambda)\downarrow 0$ for $r$ fixed and $\lambda\uparrow \infty$. 
\end{lemma}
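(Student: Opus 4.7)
The plan is to reduce the estimate on the superlevels of $Z=(X,V)$ to an $L^{1}(f_0\,dz)$-bound on the total acceleration felt along trajectories. Since $\dot X = V$, for any $z=(x,v)\in B_r$ one has $|v|\leq r$ and $|X(s,z)|\leq r + T\sup_{\tau\in [t,T]}|V(\tau,z)|$, hence
$$
|Z(s,z)| \leq r + (T+1)\sup_{\tau\in[t,T]}|V(\tau,z)|.
$$
In particular, for $\lambda>r(T+2)$,
$$
B_r \setminus G_\lambda \ \subseteq\ \Bigl\{z\in B_r: \sup_{\tau\in[t,T]}|V(\tau,z)| > \tfrac{\lambda-r}{T+1}\Bigr\},
$$
so it suffices to bound the $f_0$-measure of the right-hand side.

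Integrating the ODE $\dot V = E+F$ along the flow and using $|v|\leq r$ on $B_r$ yields, for $f_0$-a.e. $z\in B_r$,
$$
\sup_{\tau\in[t,T]}|V(\tau,z)| \leq r + A(z) + B(z),
$$
with
$$
A(z) := \int_t^T |E(\tau,X(\tau,z))|\,d\tau,\qquad B(z):= \int_t^T \frac{d\tau}{|X(\tau,z)-\xi(\tau)|^2}.
$$
It remains to bound $A$ and $B$ in $L^1(f_0\,dz)$, after which Chebyshev's inequality closes the argument. Both estimates rely on the Lagrangian pushforward identity $f(\tau,\cdot)=Z(\tau,\cdot)_{\#}f_0$, which converts integrals along the flow into integrals against $\rho(\tau,x)\,dx$.

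For $A$, the pushforward identity and H\"older's inequality give
$$
\|A\|_{L^1(f_0\,dz)} = \int_t^T \int |E(\tau,x)|\,\rho(\tau,x)\,dx\,d\tau \leq T\,\|E\|_{L^\infty_t L^{5/2}_x}\,\sup_{\tau}\|\rho(\tau)\|_{L^{5/3}},
$$
with $\|\rho\|_{L^{5/3}}$ controlled by $\|f\|_{L^\infty}$ and $H(0)$ via Proposition~\ref{prop:interpolation} with $m=2$ together with the kinetic-energy bound $\iint |v|^2 f\,dx\,dv \leq 2H(0)$. For $B$, the same pushforward and Proposition~\ref{prop:viriale} give
$$
\|B\|_{L^1(f_0\,dz)} = \int_t^T \int \frac{\rho(\tau,x)}{|x-\xi(\tau)|^2}\,dx\,d\tau \leq C(1+T).
$$
Chebyshev's inequality then yields
$$
f_0(B_r\setminus G_\lambda) \leq \frac{(T+1)\bigl(\|A\|_{L^1(f_0\,dz)} + \|B\|_{L^1(f_0\,dz)}\bigr)}{\lambda - r(T+2)} =: g(r,\lambda),
$$
which tends to $0$ as $\lambda\to\infty$ for fixed $r$, with $g$ depending only on the quantities listed in the statement. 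The main subtlety is the use of the virial estimate for $B$: this is precisely where the sub-critical mass condition $M(0)<1$ enters, and without it no $L^1$-in-$(\tau,z)$ control on the singular field $F$ along trajectories could be expected. The auxiliary matter of justifying the integrated form of the ODE along the flow and the pushforward identity $f(\tau,\cdot)=Z(\tau,\cdot)_{\#}f_0$ in the renormalized setting of Definition~\ref{def:RLF} is handled by a standard approximation of the admissible test functions $\beta$.
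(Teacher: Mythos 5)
Your reduction to the superlevels of $V$, your use of the pushforward identity to turn $\int_0^T|E(\tau,X(\tau,z))|\,d\tau$ into $\int_0^T\!\int|E|\rho\,dx\,d\tau\lesssim T\|E\|_{L^\infty_t L^{5/2}_x}\|\rho\|_{L^\infty_t L^{5/3}_x}$, your appeal to Proposition~\ref{prop:viriale} for the $F$-contribution, and your closing Chebyshev step all match the content of the paper's proof. The substantive $L^1(f_0\,dz)$ estimates are identical. However, there is a genuine gap in the very first step, precisely in the place you wave away as ``standard.''

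You integrate the pointwise ODE $\dot V=(E+F)(X)$ to obtain $\sup_\tau|V(\tau,z)|\leq r+A(z)+B(z)$. But the object $Z$ in the lemma is only a $\mu$-regular Lagrangian flow in the renormalized sense of Definition~\ref{def:RLF}: the only information available is the identity $\partial_s\beta(Z)=\beta'(Z)\cdot b(s,Z)$ for the restricted class of $\beta$ with $|\beta(z)|\lesssim\log\log(1+|z|^2)$ and $|\beta'(z)|\lesssim\frac{|z|}{(1+|z|^2)(1+\log(1+|z|^2))}$. The identity $\beta=\mathrm{id}$ is emphatically not in this class, and there is no a~priori reason the unrenormalized integral identity for $V$ should hold. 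Recovering it by ``approximating $\beta$'' is circular in this generality: to pass to the limit along admissible truncations $\beta_n\to\mathrm{id}$ you need dominated convergence, which requires knowing in advance that $\int_t^T|E+F|(\tau,X(\tau,z))\,d\tau<\infty$ for $\mu$-a.e.\ $z$ \emph{and} that $s\mapsto V(s,z)$ is a priori bounded — but the latter is exactly the conclusion of the lemma. This is precisely the technical obstacle that the paper's choice $\beta(z)=\log(1+\log(1+|z|^2/2))$ is engineered to avoid: this $\beta$ lies in the admissible class, so the renormalized equation applies directly, and because $|\beta'(w)||w|\lesssim 1$, the renormalized integrand is bounded by $|E+F|(X)$ without any unjustified integration step. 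The price is a $1/\log\log\lambda$ Chebyshev rate instead of your $1/\lambda$, but that is more than enough for the lemma. To repair your argument you should either (i) replace $V$ by an admissible $\beta(V)$ as the paper does, or (ii) first establish — via the $L^1(f_0)$ finiteness of $A+B$ and a careful monotone truncation argument within the renormalized framework — that the unrenormalized integral identity for $V$ holds $\mu$-a.e., and only then invoke Chebyshev. Option~(ii) can likely be made to work, but it is not standard and must be written out; as it stands the proof has a hole at its load-bearing joint.
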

\begin{remark}
Notice that this lemma holds also for the regularized problem (system \eqref{eq:VP-dirac}-\eqref{eq:newton} with initial density $f_0^{n}$). Therefore we have, for all $r,\la >0$, 
\be
f_{0}^{n}(B_r \backslash G_{\lambda}^{n}) \leq g_n(r,\la),
\ee
where $g_n$ converges to zero for $r$ fixed and $\la \uparrow \infty$. Moreover, this convergence is uniform in~$n$. Indeed the proof of Lemma \ref{estsuper} entails the functions $g_n$ to be increasing with respect to the norms of $E_n$, $U_n$, $F_n$, $f_n$, and with respect to $H_n(0)$. These quantities are in turn all bounded by the same quantities without the index $n$. Therefore, due to the choice of the initial densities of the regularized problem, we have
\be
\begin{split}
f_{0}(B_r \backslash G_{\lambda}^{n})&\leq f_{0}^{n}(B_r \backslash G_{\lambda}^{n})+f_0 \Big(\R^{6}\backslash \big\{(x,v):\frac{1}{n}<|x-\xi_0|<n,|v-\eta_0|<n \big\}\Big)\\
&\leq g_n(r,\la)+f_0 \Big( \big\{(x,v):|x-\xi_0|\leq \frac{1}{n}\, \text{or}\, |x-\xi_0|\geq n \big\} \Big)\\
&\ \ \ \ +f_0 \big(\big\{(x,v):|v-\eta_0|\geq n\big\}\big),
\end{split}
\ee
where $g_n(r,\la)$ depends on the norms of $E$, $U$, $F$, $f$ and on $H(0)$, and tends to zero as $\la \rightarrow \infty$ uniformly in $n$. Moreover the last two terms tend to zero as $n\rightarrow \infty$ by Lebesgue's Dominate Convergence Theorem. Hence we have, for any fixed $\epsilon,\, r>0$, that there exist $\la>0$ and $N \in \N$ such that
\begin{equation}
f_{0}(B_r \backslash G_{\lambda}^{n})\leq \epsilon
\end{equation} 
for each $n\geq N$.
\end{remark}

\begin{proof}[Proof of Lemma \ref{estsuper}]
We call $\tilde{G}_{\lambda}$ the sublevel of $V$ and we remark that by the first equation in \eqref{eq:characteristicsystem}, whenever $(x,v)\in \tilde{G}_{\lambda}$ one has $|X(s,x,v)|\leq |x|+|s-t|\lambda$ and $|Z(s,x,v)|\leq |x|+(1+T)\lambda$. Thus for $\lambda > r$ one has $B_r \backslash G_{\lambda}\subset B_r \backslash \tilde{G}_{(\lambda - r)/(1+T)}$, while for $\lambda \leq r$ we can just use that $B_r\backslash G_{\lambda} \subset B_r$, so to conclude the proof it suffices to bound the superlevels of $V$.
In order to do this we will first prove that
\begin{equation}\label{sup1+logv2<cost}
\iint_{B_r} \sup_{s\in [t,T]}\log\left(1+\log\left(1+\frac{|V(s,x,v)|^{2}}{2}\right)\right)\,f_0(x,v)\,dx\,dv \leq A
\end{equation}
where $A$ is a constant depending on $\|E\|_{L^{\infty}_t(L^{5/2}_x)}$, $\|f\|_{L^{\infty}_t(L_{x,v}^{\infty})}$, $M(0)$ and $H(0)$. Once one has shown that \eqref{sup1+logv2<cost} holds, the statement of the lemma follows simply by the following inequality:
\begin{align}\label{eq:Glambdabound}
\iint_{B_r} \sup_{s\in [t,T]}\log\left(1+\log\left(1+\frac{|V(s,x,v)|^{2}}{2}\right)\right)\,f_0(x,v)\,dx\,dv\nonumber \\
\geq f_0(B_r\setminus \tilde{G}_\lambda) \log\left(1+\log\left(1+\frac{\lambda^{2}}{2}\right)\right).
\end{align}
Consider the ODE system \eqref{eq:charXV} and recall the Definition \ref{def:RLF}. Let $\beta(z)=\log\left(1+\log\left(1+\frac{|z|^{2}}{2}\right)\right)$, then
 \begin{align}\label{eq:betaprime}
 &\beta'(z)=\frac{z}{\left(1+\log\left(1+\frac{ {|z|^2}}{2}\right)\right)\left( {1+\frac{ {|z|^2}}{2}}\right)}\,.
  \end{align}
 Using \eqref{eq:RSder} and \eqref{eq:betaprime}, we compute
 \begin{equation*}
 \p_s[\beta(V(s,x,v))]= \frac{(E(s,X(s,x,v))+F(s,X(s,x,v)))\cdot V(s,x,v) }{\left(1+\log\left(1+\frac{|V(s,x,v)|^2}{2}\right)\right)\left(1+\frac{|V(s,x,v)|^2}{2}\right)}=\Phi_1(s,x,v)+\Phi_2(s,x,v)\,. 
 \end{equation*}
By integrating the above equation w.r.t. time we get
 \begin{equation}\label{int}
  \log\left(1+\log\left(1+\frac{ {|V(s,x,v)|^2}}{2}\right)\right)= \log\left(1+\log\left(1+\frac{ {|v|^2}}{2}\right)\right)+\int_{t}^{s}\left(\Phi_1(\tau,x,v)+\Phi_2(\tau,x,v)\right)d\tau.
 \end{equation} 
Then we plug \eqref{int} in \eqref{sup1+logv2<cost}, verifying that we obtain an upper bound. The inequality
\begin{equation*}
\log\left(1+\log\left(1+\frac{|v|}{2}^2\right)\right)\leq \frac{|v|}{2}^2
\end{equation*}
allows to estimate the first term multiplied by $f_0$ with the kinetic energy, that is in turn bounded by the initial total energy $H(0)$. Thus this term belongs to $L^{1}(\IR_x^{3}\times \IR_v^{3}, d\mu)$ with $\mu=f_0\,\mathcal{L}^{6}$, so it remains to estimates the terms in the time integral. For the first term we have
\begin{equation}
\begin{split}
&\iint_{B_r} \sup_{s\in [t,T]}\int_t^{s}\Phi_1(\tau,x,v)\,f_0(x,v)\,d\tau \,dx\,dv \leq \iint_{B_r}\int_0^{T}|\Phi_1(\tau,x,v)|\,f_0(x,v)\,d\tau \,dx\,dv\\
& \leq \int_0^{T}\iint_{B_r}\frac{|E(\tau,X(\tau,x,v))|| V(\tau,x,v)| }{\left(1+\log\left(1+\frac{|V(\tau,x,v)|^2}{2}\right)\right)\left(1+\frac{|V(\tau,x,v)|^2}{2}\right)}f_0(x,v)\,dx\,dv\, d\tau\\
& = \int_0^{T}\iint_{B_r}\frac{|E(\tau,X(\tau,x,v))|| V(\tau,x,v)| }{\left(1+\log\left(1+\frac{|V(\tau,x,v)|^2}{2}\right)\right)\left(1+\frac{|V(\tau,x,v)|^2}{2}\right)}f(\tau,X(\tau,x,v),V(\tau,x,v))\,dx\,dv\, d\tau\\
& = \int_0^{T}\iint_{B_r}\frac{|E(\tau,x)||v| }{\left(1+\log\left(1+\frac{|v|^2}{2}\right)\right)\left(1+\frac{|v|^2}{2}\right)}f(\tau,x,v)\,dx\,dv\, d\tau\\
& \leq C \int_0^{T}\iint_{B_r}|E(\tau,x)|f(\tau,x,v)\,dx\,dv\, d\tau \leq C \int_0^{T}\int|E(\tau,x)|\rho(\tau,x)\,dx\,d\tau \\
& \leq C \int_0^{T}\|E\|_{L^{5/2}}\|\rho\|_{L^{5/3}} \leq C\, T \|E\|_{L^{\infty}(L^{5/2})}\|\rho\|_{L^{\infty}(L^{5/3})}
\end{split}
\end{equation}

For the last term of \eqref{int} we compute

\begin{equation}\label{eq:phi4}
\begin{split}
\iint_{B_r}& \sup_{s\in [t,T]}\int_t^{s} \Phi_{2}(\tau,x,v)\, f_0(x,v),d\tau\, dx\, dv \leq \int_0^{T}\iint |\Phi_{2}(\tau,x,v)|\, f_0(x,v)\, dx\, dv\,d\tau \\
& = \int_0^{T}\iint \frac{|F(\tau,X(\tau,x,v)) \. V(\tau,x,v)| }{\left(1+\log\left(1+\frac{|V(\tau,x,v)|^2}{2}\right)\right)\left(1+\frac{|V(\tau,x,v)|^2}{2}\right)} \, f_0(x,v) \, dx\, dv\,d\tau\,. 
\end{split}
\end{equation}
Since the denominator of the integrand is bounded from below, we can estimate the above quantity as follows:   
\begin{equation*}
\begin{split}
\eqref{eq:phi4}
& \leq C \int_0^{T}\iint |F(\tau,X(\tau,x,v))|\,f_0(x,v)\,dx\,dv\,d\tau \\
& = C \int_0^{T}\iint \frac{f_0(x,v)}{|X(\tau,x,v)-\xi(\tau)|^{2}} dx\, dv\,d\tau \\
& = C \int_0^{T}\iint \frac{f(\tau,X(\tau,x,v),V(\tau,x,v))}{|X(\tau,x,v)-\xi(\tau)|^{2}} dx\, dv\,d\tau \\
& = C \int_0^{T}\iint \frac{f(\tau,x,v)}{|x-\xi(\tau)|^{2}} dx\, dv\,d\tau \leq C(1+T) , 
\end{split}
\end{equation*}
where in the last inequality we used Proposition \ref{prop:viriale}.

Thus, condition \eqref{sup1+logv2<cost} is satisfied and the proof is completed thanks to \eqref{eq:Glambdabound}.
\end{proof}

\bigskip

\noindent\textbf{Proof of (R2a): spatial regularity}
\medskip

\noindent Since $b_n(t,x,v)=(b_1^{n}(v),b_2^{n}(t,x))$ with $b_1^{n}(v)=v$ and $b_2^{n}(t,x)=E_n(t,x)+F_n(t,x)$, we observe that the Lipschitz constants of $b_1^{n}$ and $b_1$ are trivially equi-bounded. We are left to show that the derivatives of $b_2^{n}$ and $b_2$ are singular integrals of fundamental type on $\R^{3}$ of finite measures, and that the norms of the kernels associated with the singular integral operators and those of the measures in $L^{1}((0,T);\mathcal{M}(\R^{3}))$ are equi-bounded. We compute, outside of the origin,

\begin{align*}
\partial_{x_j}(b_2)_i(x)=  \partial_{x_j}(E+F)_i(x)&= \partial_{x_j}\left( \frac{\cdot}{|\cdot |^{3}} \ast \rho(t,\cdot) \right)_i (x)+ \partial_{x_j}\left( \frac{\cdot}{|\cdot |^{3}} \ast \delta_{\xi(t)} \right)_i (x)\\
&= \left( \partial_{x_j}\frac{(\cdot)_i}{|\cdot |^{3}} \ast \left(\rho(t,\cdot)+\delta_{\xi(t)}\right) \right) (x)\\
&= \left( \frac{\delta_{ij}|\cdot|^{2}-3\cdot_i \cdot_j}{|\cdot|^{5}} \ast \left(\rho(t,\cdot)+\delta_{\xi(t)}\right) \right) (x).
\end{align*}
Therefore $\partial_{x_j}(b_2)_i$ is a singular integral of the finite measure $\rho+\delta_{\xi(t)}$, with kernel
\[ K_{ij}(y)= \frac{\delta_{ij}|y|^{2}-3 y_i y_j}{|y|^{5}}. \]
The kernel satisfies conditions of Def.2.13 in \cite{BC}, therefore it is a singular kernel of fundamental type. Similarly we have $ \partial_{x_j}(b_2^{n})_i= K_{ij}(\cdot)\ast (\rho(t,\cdot)+\delta_{\xi_n(t)})$, hence also $\partial_{x_j}(b_2^{n})_i$  are singular integrals of finite measures, with equi-bounded kernels and equi-bounds on the measures' norms.


\bigskip

\noindent\textbf{Proof of (R3)}
\medskip

\noindent We shall prove now that the $L^{p}$-norms of $b$ and $b_n$ in $(0,T)\times B_r$ are equi-bounded, for some $p>1$ and for any fixed $r>0$. Through an easy computation we notice that the $M^{3/2}$-pseudo-norms of $F$ and $F_n$ are equi-bounded and uniform in $t$:
$$
|||F_n(t,\cdot)|||^{{3/2}}_{M^{3/2}} =\sup_{\la >0} \left\lbrace \la^{3/2}\mathcal{L}^{3}\left(\{x:\frac{1}{|x-\xi_n(t)|^{2}}>\la\}\right)\right\rbrace=
\sup_{\la >0}\left\{\la^{3/2}\int_{|x-\xi_n(t)|<\frac{1}{\sqrt{\la}}} 1\, dx \right\} \leq C \,.
$$
Similarly we have that the $L^{1}$-norms of $F$ and $F_n$ are equi-bounded in $(0,T)\times B_r$ for any $r>0$:
$$
\sup_{t\in[0,T]}\|F_n\|_{L^{1}( B_r)} =\sup_{t\in[0,T]}\int_{B_r}\frac{1}{|x-\xi_n(t)|^{2}} dx= \sup_{t\in[0,T]}\int_{B_r(\xi_n(t))}\frac{1}{|y|^{2}} dy \leq C \,.
$$
Furthermore Proposition \ref{prop:Sobolev} tells us that $E$ and $E_n$ belong to $L^{\infty}((0,T);M^{3/2}(\R^{3}))$, with the respective pseudo-norms which are equi-bounded in $n$. Therefore the second component of the vector fields $b$ and $b_n$ (i.e. $E+F$, $E_n+F_n$) are equi-bounded in the space $L^{\infty}((0,T);M^{3/2}(\R^{3}))\subset L^{p}_{\rm loc}((0,T)\times \R^{3})$ for any $1\leq p <\frac{3}{2}$. Since $v\in L^{p}_{\rm loc}((0,T)\times \R^{3})$ for any $p$, we conclude that $b$, $b_n$ belong to $L^{p}_{\rm loc}((0,T)\times \R^{3})$ for any $1\leq p <\frac{3}{2}$, with uniform bound on the norms.
\bigskip

\noindent\textbf{Proof of the equi-boundedness of \eqref{loglog}}
\medskip

\noindent We observe that 
\begin{equation}
|Z_n|\leq |X_n|+|V_n|\leq |x|+(1+T)|V_n|\,.
\end{equation}
Thus it suffices to prove the equi-boundedness of \eqref{loglog} for the regularised flow $V_n$. This is a byproduct of the proof of Lemma \ref{estsuper}, where we show that the constant $A$ depends on quantities which are uniformly bounded in $n$.

\subsection{Conclusion of the proof of Theorem~\ref{thm:main}: existence of Lagrangian solutions to the Vlasov-Poisson system} 

Let $f_0$ be as in Theorem \ref{thm:main}. In order to prove existence of a Lagrangian solution to system \eqref{eq:VP-dirac}-\eqref{eq:newton}, we use a compactness argument. For each $n$, we consider the initial datum $f_0^n$ defined in \eqref{eq:indata-n}, which converges to $f_0$. The result in \cite{MMP} ensures existence and uniqueness of the classical Lagrangian solution $f_n$, $(\xi_n,\eta_n)$ to the Vlasov-Poisson system with point charge 
\begin{equation}\label{eq:VP-dirac-n}\left\{
\begin{array}{l}
\partial_t f_n + v\cdot\nabla_x f_n + (E_n+F_n)\cdot\nabla_v f_n = 0\,,\\\\
f_n(0,x,v)=f_0^{n}(x,v)\,, \\\\
E_n(t,x)=\int\frac{x-y}{|x-y|^3}\rho_n(t,y)\,dy\,,\\\\
\rho_n(t,x)=\int f_n(t,x,v)\,dv\,,\\\\
F_n(t,x)=\frac{x-\xi_n(t)}{|x-\xi_n(t)|^3}\,,
\end{array}
\right.
\end{equation}
where $(\xi_n(t),\eta_n(t))$ evolves according to
\begin{equation}\label{eq:newton-n}
\left\{ \begin{array}{l}
\dot{\xi}_n(t)=\eta_n(t)\,, \\
\dot{\eta}_n(t)=E_n(t,\xi_n(t)) \,, \\
(\xi_n(0),\eta_n(0))= (\xi_0,\eta_0)\,.
\end{array}
\right.
\end{equation}
Therefore, there exists a unique flow $Z_n=(X_n, V_n):[0,T]\times \R^3\times\R^3\rightarrow \R^3\times\R^3$ associated with the vector field $(V_n, E_n(X_n)+F_n(X_n))$, such that $f^n={Z_n}_\# f_0^n$ is the push-forward of $f_0^n$ through $Z_n$, i.e. \eqref{eq:pushf}.

From Subsection \ref{subsec:flow-existence}, there exists $Z$ such that $Z_n\to Z$ in measure, with respect to $\mu=f_0\mathcal{L}^{6}$. Therefore
we define a density $f$ which is the push forward of the initial data $f_0$ through the limiting flow $Z$, i.e.
\begin{equation*}
f:=Z_{\#} f_0\,.
\end{equation*}
The aim of this subsection is to verify that the above defined $f$ is indeed a solution to \eqref{eq:VP-dirac}-\eqref{eq:newton}. In other words, we want to perform the limit $n\to\infty$ in \eqref{eq:VP-dirac-n}-\eqref{eq:newton-n} and get \eqref{eq:VP-dirac}-\eqref{eq:newton}. This will conclude the proof of Theorem~\ref{thm:main}. To this end we observe that, up to subsequences:
\begin{itemize}
\item $f_n\rightharpoonup f$ weakly in $L^1_{x,v}$ and weakly$^*$ in $L^\infty_{x,v}$, uniformly in $t$.\\
Indeed, $f_0^n\to f_0$ in $L^1_{x,v}$ and $Z_n\to Z$ in measure $\mu$. Since the latter limit is uniform in $s$ and $t$, we define the inverse of the flow $Z_n^{-1}(t,s,x,v):=Z_n(s,t,x,v)$  and observe that $Z_n^{-1}\to Z^{-1}$ in measure and therefore $\mu$-a.e., uniformly in $t$. Given $\varphi\in C_c(\R^3\times\R^3)$, we can estimate
\begin{equation*}
\begin{split}
\iint &\varphi(x,v)\,(f_n(t,x,v)-f(t,x,v))\,dx\,dv\\
&= \iint \varphi(x,v)\,\left(f_0^n(Z_n^{-1}(t,x,v))-f_0(Z^{-1}(t,x,v))\right)\,dx\,dv\\
&=\iint \varphi(Z_n(t,x,v))\,f_0^n(x,v)\,dx\,dv-\iint \varphi(Z(t,x,v))\,f_0(x,v)\,dx\,dv\\
&=\iint \left(\varphi(Z_n(t,x,v))-\varphi(Z(t,x,v))\right)\,f_0(x,v)\,dx\,dv\\
&+\iint\varphi(Z_n(t,x,v))\,(f_0^n(x,v)-f_0(x,v))\,dx\,dv\,.
\end{split}
\end{equation*}
The first term in the r.h.s. converges to zero, since $Z_n\to Z$ $\mu$-a.e. The second term also converges to zero because $\varphi$ is bounded and $f_0^n\to f_0$ in $L^1_{x,v}$.   Moreover, since $f_n$ is equi-bounded in $L^1_{x,v}\cap L^\infty_{x,v}$, uniformly in $t$,    we obtain  weak convergence  in $L^1_{x,v}$ and weak$^*$ convergence in $L^\infty_{x,v}$ of $f_n$ to $f$, uniformly in $t$.
%

\item $\rho_n\rightharpoonup\rho$ weakly in $L^1_x$. It follows from the weak $L^1_{x,v}$ convergence of $f_n$ to $f$. Moreover, thanks to Remark \ref{remark:rho}, $\rho_n \rightharpoonup\rho$  weakly in $L^{s}_x$, for some $s>3$.

\item $\partial_t f_n$ converges to $\partial_t f$ in $\mathcal{D}'$ and $v\cdot\nabla_x f_n$ converges to $v\cdot\nabla_x f$ in $\mathcal{D}'$. 

\item $E_n\to E$ uniformly. This is a consequence of Proposition \ref{prop:moments}. Indeed, the r.h.s. of equation \eqref{eq:momentsbis} is uniformly bounded in $n$. Therefore, by Proposition \ref{prop:interpolation}, $\|\rho_n\|_{L^{\frac{m+3}{3}}}$ is uniformly bounded and Proposition \ref{prop:Sobolev} yields $\{E_n\}_n$ equi-H\"{o}lder. Ascoli-Arzel\`{a} Theorem guarantees the existence of a uniformly convergent subsequence. 
The limit couple $(E,\rho)$ satisfies $E(t,x)=\int \frac{x-y}{|x-y|^3}\,\rho(t,y)\,dy$, since $E\in M^{3/2}$ and decays at infinity, while $\rho\in L^s$, for some $s>3$.

\item $E_n\cdot\nabla_v f_n\to E\cdot\nabla_v f$ in $\mathcal{D}'$. This follows by rewriting $E_n\cdot\nabla_v f_n=\dev_v (E_n\,f_n)$ and $E\cdot\nabla_v f=\dev_v (E\,f)$,  and by the facts that $E_n\to E$ uniformly  and $f_n\rightharpoonup f$ weakly in $L^1_{x,v}$.
\end{itemize}

\noindent We are left with the part of the system \eqref{eq:VP-dirac-n}-\eqref{eq:newton-n} which involves the point charge. In particular, we define 
\be
\gamma_n(t)= (\xi_n(t),\eta_n(t))
\ee
and set
\be\label{eq:xi-eta}
(\xi(t),\eta(t)):=\lim_{n\to\infty}\gamma_n(t)\,.
\ee
Observe that the limit in \eqref{eq:xi-eta} exists. Indeed, $\gamma_n(t)$ is equi-Lipschitz because of the following estimate:
\be\label{eq:gamma-n}
{\rm Lip}(\gamma_n)\leq\|\dot{\gamma}_n\|_{L^\infty}\leq \sup_t |\eta_n(t)|+\sup_t |E_n(t,\xi_n(t))|\,,
\ee
where ${\rm Lip}(\gamma_n)$ is the Lipschitz constant of $\gamma_n$. Proposition \ref{prop:charge-bounds} yields a uniform bound on the first term in the r.h.s. of \eqref{eq:gamma-n}, that combined with the uniform bounds on $E_n$ proved in this subsection, implies $\gamma_n$ equi-Lipschitz.  By Ascoli-Arzel\`{a} Theorem, there exists a subsequence $\{(\xi_{n_k}(t),\eta_{n_k}(t))\}_k$ which converges uniformly to $(\xi(t),\eta(t))$. 
To perform the limit in \eqref{eq:VP-dirac-n}-\eqref{eq:newton-n}, we observe that
\begin{itemize}
\item $(\dot{\xi}_n(t),\dot{\eta}_n(t))\to(\dot{\xi}(t),\dot{\eta}(t))$.  Indeed, $(\xi_n(t),\eta_n(t))$ converges to $(\xi(t),\eta(t))$ uniformly and
\be\label{eq:dotxi-doteta}
\sup_t|\dot{\gamma}_n(t)-(\eta(t),E(t,\xi(t)))|\leq \sup_t|\eta_n(t)-\eta(t)|+\sup_t|E_n(t,\xi_n(t))-E(t,\xi(t))|\,.
\ee
The first term in the r.h.s. of \eqref{eq:dotxi-doteta} converges to zero uniformly. As for the second term, we use that
\be\label{eq:En-E}
\begin{split}
\sup_t |E_n(t,\xi_n(t))-&E(t,\xi(t))|\\
&\leq \sup_t |E_n(t,\xi_n(t))-E(t,\xi_n(t))| + \sup_t |E(t,\xi_n(t))-E(t,\xi(t))|\\
&\leq \sup_{t,x}|E_n(t,x)-E(t,x)|+\sup_t |E(t,\xi_n(t))-E(t,\xi(t))| .
\end{split}
\ee
Combining the facts that $E_n\to E$, $\xi_n\to\xi$ and $E$ is uniformly continuous, the last line in \eqref{eq:En-E} vanishes as $n\to\infty$.

\item $F_n\to F$ in $L^1_{x,\,\rm loc}$. Indeed, $F_n\to F$ pointwise, by the uniform convergence of $\xi_n(t)$ to $\xi(t)$ up to subsequences, and $F_n,\,F\in L^1_{\rm loc}(\R^3)$. Therefore, we conclude by Dominated Convergence's Theorem.

\item $F_n\cdot\nabla_v f_n\to F\cdot\nabla_v f$ in $\mathcal{D}'$. This follows by rewriting $F_n\cdot\nabla_v f_n=\dev_v (F_n\,f_n)$ and $F\cdot\nabla_v f=\dev_v (F\,f)$,  and by the facts that $F_n\to F$ in $L^1_{\rm loc}(\R^3)$ and $f_n\stackrel{*}{\rightharpoonup}  f$ weakly$^*$ in $L^\infty_{x,v}$.
  
\end{itemize}

%
%
%

 \bigskip


\adresse

 \end{document}